\documentclass[a4paper,11pt]{article}
\usepackage{amsmath,amsthm,amsfonts,amssymb,enumitem,graphicx,calc,microtype,mathtools,thmtools,underscore,anyfontsize,lmodern,float}
\usepackage{latexsym}
\usepackage[dvipsnames]{xcolor}
\setlength{\marginparwidth }{2cm} 
\usepackage{todonotes}
\usepackage{wrapfig}
\usepackage{paralist}
\usepackage{enumitem}
\usepackage[english]{babel}
\usepackage[utf8]{inputenc}
\usepackage[T1]{fontenc}
\usepackage[margin=30mm]{geometry}
\renewcommand{\baselinestretch}{1.1}
\setlength{\footnotesep}{\baselinestretch\footnotesep}
\setlength{\parindent}{0cm}
\setlength{\parskip}{1.25ex}
\allowdisplaybreaks
\newcommand{\defn}[1]{\textcolor{Maroon}{\emph{#1}}}
\usepackage[numbers,sort&compress,longnamesfirst]{natbib}
\makeatletter
\def\NAT@spacechar{~}
\makeatother
\usepackage[pdftitle={Induced Subgraphs and Path Decompositions},
pdfauthor={Hickingbotham},
colorlinks =true,
linkcolor={black},
urlcolor={blue!80!black},
filecolor={blue!80!black},
citecolor={black},
anchorcolor=blue,
filecolor=blue, 
menucolor=blue]{hyperref}

\usepackage[noabbrev,capitalise]{cleveref}
\crefname{lem}{Lemma}{Lemmas}
\crefname{thm}{Theorem}{Theorems}
\crefname{cor}{Corollary}{Corollaries}
\crefname{prop}{Proposition}{Propositions}
\crefname{conj}{Conjecture}{Conjectures}
\crefname{open}{Open Problem}{Open Problems}
\crefname{obs}{Observation}{Observations}
\crefformat{equation}{(#2#1#3)}
\Crefformat{equation}{Equation #2(#1)#3}

\theoremstyle{plain}
\newtheorem{thm}{Theorem}
\newtheorem{lem}[thm]{Lemma}

\newtheorem{prop}[thm]{Proposition}

\theoremstyle{definition}

\DeclareMathOperator{\tw}{tw}

\DeclareMathOperator*{\pw}{pw}

\newcommand{\beginproof}{\emph{Proof.}}

\renewcommand{\leq}{\leqslant}

\renewcommand{\geq}{\geqslant}

\theoremstyle{definition}

\DeclareMathOperator{\dist}{dist}

\DeclareMathOperator*{\N}{\mathbb{N}}

\DeclareMathOperator*{\G}{\mathcal{G}}

\DeclareMathOperator*{\W}{\mathcal{W}}


\presetkeys%
{todonotes}%
{inline}{}

\begin{document}
	
	\title{\bf\Large Induced Subgraphs and Path Decompositions}
	\author{%
		Robert Hickingbotham\thanks{School of Mathematics, Monash University, Melbourne, Australia (\texttt{robert.hickingbotham@monash.edu}). Research supported by an Australian Government Research Training Program Scholarship.}
	}
	
	\date{\normalsize\today}
	\maketitle
	
\begin{abstract}
A graph $H$ is an induced subgraph of a graph $G$ if a graph isomorphic to $H$ can be obtained from $G$ by deleting vertices. Recently, there has been significant interest in understanding the unavoidable induced subgraphs for graphs of large treewidth. Motivated by this work, we consider the analogous problem for pathwidth: what are the unavoidable induced subgraphs for graphs of large pathwidth? While resolving this question in the general setting looks challenging, we prove various results for sparse graphs. In particular, we show that every graph with bounded maximum degree and sufficiently large pathwidth contains a subdivision of a large complete binary tree or the line graph of a subdivision of a large complete binary tree as an induced subgraph. Similarly, we show that every graph excluding a fixed minor and with sufficiently large pathwidth contains a subdivision of a large complete binary tree or the line graph of a subdivision of a large complete binary tree as an induced subgraph. Finally, we present a characterisation for when a hereditary class defined by a finite set of forbidden induced subgraphs has bounded pathwidth. 
\end{abstract}

\section{Introduction}	
Pathwidth and treewidth are fundamental parameters in structural and algorithmic graph theory \citep{Reed03,HW2017tied,bodlaender1998partial}. A paramount theme has been to understand the substructures of graphs with large pathwidth or large treewidth. Under the graph minor and subgraph relations, these substructure are well understood. For pathwidth, the excluded forest minor theorem \cite{robertson1983graph} implies that graphs with sufficiently large pathwidth contains a subdivision of a large complete binary tree as a subgraph. For treewidth, the excluded grid minor theorem \cite{robertson1986planar} implies that graphs with sufficiently large treewidth contains a subdivision of a large wall as a subgraph.

In recent years, there has been substantial interest in understanding the substructures for graphs with large treewidth with respect to induced subgraphs \cite{LR2022treewidth,AAKST2021evenhole,korhonen2022,PSTT2021theta,ST2021wheels,ACV2022inducedI,ACDHRSV2021inducedII,ACMHS2021inducedIII,ACHS2022inducedIV,ACHS2022inducedV}. For example, it was recently shown that graphs with bounded degree and sufficiently large treewidth contains a subdivision of a large wall or the line graph of a subdivision of a large wall as an induced subgraph \cite{korhonen2022}. Motivated by this work, we consider the analogous problem for pathwidth: what are the unavoidable induced subgraphs for graphs with large pathwidth? Due to the excluded forest minor theorem, obvious candidates are subdivisions of large complete binary trees and line graphs\footnote{The \defn{line graph $L(G)$} of a graph $G$ has $V(L(G))=E(G)$ where two vertices in $L(G)$ are adjacent if they are incident to a common vertex in $G$.} of subdivisions of large complete binary trees. While determining the other candidates in the general setting looks challenging (see \cref{SectionGeneral} for a discussion on this), we show that these graphs suffice in the bounded degree setting (\cref{SectionWattletoSubgraph}) as well as for $K_n$-minor-free graphs (\cref{SectionMinorFreetoInduced}). Let \defn{$T_k$} denote the complete binary tree of height $k$. 

\begin{thm}\label{MainTheoremPW}
There is a function $f$ such that every graph $G$ with maximum degree $\Delta$ and pathwidth at least $f(k,\Delta)$ contains a subdivision of $T_k$ or the line graph of a subdivision of $T_k$ as an induced subgraph.
\end{thm}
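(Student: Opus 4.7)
The plan is to start from the classical subgraph subdivision obtained via the excluded forest minor theorem and then, using the bounded-degree hypothesis, ``clean'' it into one of the two induced structures. Concretely, by the excluded forest minor theorem of Bienstock, Robertson, Seymour and Thomas, for $N$ a sufficiently large function of $k$ and $\Delta$, the hypothesis $\pw(G) \geq f(k,\Delta)$ guarantees that $G$ contains, as a (not necessarily induced) subgraph, a subdivision $H$ of $T_N$. Write $B$ for the branch vertices of $H$ (the subdivision vertices corresponding to the degree-$3$ vertices of $T_N$) and $P_e$ for the subdivision path replacing each edge $e$ of $T_N$.

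The only obstruction to $H$ being induced in $G$ is the set of \emph{chords}: edges $uv \in E(G)\setminus E(H)$ with $u,v \in V(H)$. Since $\Delta(G)\leq \Delta$, each vertex of $H$ is incident with at most $\Delta$ chords, so the local chord pattern around any branch vertex, and along any short sub-path of some $P_e$, falls into one of only $O_\Delta(1)$ isomorphism types. My second step would be to apply a Ramsey / pigeonhole argument over the binary tree $T_N$: at each branch vertex $b\in B$ and along each of its three incident segments, record the local chord type, and then pass to a regular complete binary subtree $T_k$ on which this local data is uniform. Choosing $N$ large enough (iterated-tower in $k$ and in the number of types) makes this feasible.

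Once the local chord behaviour is uniform, the problem reduces to a finite case analysis of what a single ``typical'' chord pattern can look like. If the uniform type admits a small local modification (truncating each $P_e$ by a few subdivision vertices, or deleting short initial segments at each branch vertex) that removes every chord, then after this modification we obtain an induced subdivision of $T_k$. Otherwise, the only surviving uniform patterns must be such that the chords at each branch vertex and along each $P_e$ combine with the subdivision paths to form a consistent triangle/path skeleton; passing to the induced subgraph on the appropriate vertex subset then exposes exactly the line graph of a subdivision of $T_k$.

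The main obstacle will be the final step, the case analysis of uniform chord patterns: one needs a clean argument that every pattern left after Ramsey reduction either admits a local deletion giving an induced subdivision of $T_k$, or forces, at every branch vertex and along every path $P_e$, precisely the triangle-and-path chord skeleton that defines a line-graph-of-a-subdivision. Handling \emph{long} chords, whose endpoints lie far apart in $H$, is the delicate point, since a single such chord can shortcut the tree structure. The bounded-degree hypothesis is what lets one spread $T_N$ out enough that after pigeonholing, any persistent long chord can be avoided by restricting to a complete binary subtree lying outside its shortcut range.
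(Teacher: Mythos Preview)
Your outline diverges substantially from the paper's argument, and the divergence occurs exactly at the step that carries all the difficulty. The paper does \emph{not} try to clean chords off a subgraph subdivision of $T_N$. Instead it first produces $T_k$ as an \emph{induced minor} of $G$ (Theorem~\ref{MainTheoremInduced}), using Korhonen's sparsifable-graph machinery: one partitions $V(G)$ into $\Delta^4+1$ distance-$5$ independent sets, and iteratively passes to induced subgraphs in which more and more vertices become sparsifable while pathwidth stays large; in a sparsifable graph, a minor of $T_k^+$ is automatically an induced minor of $T_k$ (Lemma~\ref{SparsifableInducedMinors}). Only \emph{after} one has an induced minor is there a local cleaning step (Lemmas~\ref{MinorToWattle}--\ref{WattletoSubgraph}), and that step is easy precisely because the induced-minor model already forbids edges between non-adjacent branch sets.

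Your scheme, by contrast, starts from a mere subgraph subdivision and must deal with arbitrary chords. The proposed Ramsey reduction records, at each branch vertex, a ``local chord type'', but a chord is not a local object: its two endpoints can lie on interior subdivision vertices of two paths $P_e,P_{e'}$ far from every branch vertex, so it is invisible to any bounded-radius local data. Bounded degree limits how many chords touch a given vertex, but it does not limit how \emph{long} a chord can be, nor does it give finitely many global isomorphism types to pigeonhole on. Your final paragraph concedes that long chords are the delicate point and proposes to ``spread $T_N$ out'' so that a persistent long chord can be avoided by restricting to a subtree outside its range; but there is no argument that this terminates, since every subtree you pass to can acquire its own new long chords, and nothing in the setup gives a decreasing potential. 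Equally unsupported is the dichotomy in your case analysis: there is no reason the uniform chord pattern surviving Ramsey must be either ``locally removable'' or ``exactly the triangle-and-path skeleton of a line graph''. These two gaps --- handling long chords and justifying the two-case outcome --- are the whole theorem; the excluded forest minor step you do carry out is the easy part.
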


\begin{thm}\label{MainTheoremPWMinor}
For every fixed $n\in \N$, there is a function $f$ such that every $K_n$-minor-free graph $G$ with pathwidth at least $f(k)$ contains a subdivision of $T_k$ or the line graph of a subdivision of $T_k$ as an induced subgraph.
\end{thm}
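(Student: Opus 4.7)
My plan is to reduce \cref{MainTheoremPWMinor} to \cref{MainTheoremPW} by showing that every $K_n$-minor-free graph of sufficiently large pathwidth contains an induced subgraph of bounded maximum degree (in terms of $n$) that still has large pathwidth. The key leverage is that $K_n$-minor-free graphs are sparse (Mader: linear edge density) and have $K_{n-1}$-minor-free neighborhoods, so the local structure at any vertex is highly constrained.

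\textbf{Step 1 (extracting a bounded-degree induced core).} I aim to prove a structural lemma: there exist functions $g$ and $D$ such that every $K_n$-minor-free graph $G$ with $\pw(G) \geq g(p,n)$ has an induced subgraph $H$ with $\Delta(H) \leq D(n)$ and $\pw(H) \geq p$. The strategy is iterative: while $G$ has a vertex $v$ of degree exceeding $D(n)$, delete it; since $\pw$ drops by at most one per deletion, the only risk is running out of pathwidth before running out of high-degree vertices. To control this, I would first invoke the excluded forest minor theorem to extract a subdivision $S$ of $T_m$ inside $G$ for $m$ much larger than $p$, and then argue via $K_n$-minor-freeness that only a bounded number of vertices of $G$ can have many neighbors across distinct branches of $S$: otherwise $S$ together with these vertices would certify a $K_n$ minor.

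\textbf{Step 2 (invoking \cref{MainTheoremPW}).} Given an induced subgraph $H\subseteq G$ with $\Delta(H)\leq D(n)$ and $\pw(H)$ at least the pathwidth threshold from \cref{MainTheoremPW} (instantiated at $k$ and $\Delta = D(n)$), apply \cref{MainTheoremPW} to obtain the desired induced subdivision of $T_k$ or induced line graph of a subdivision of $T_k$ in $H$. Since $H$ is induced in $G$, the same holds in $G$, completing the proof.

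\textbf{Main obstacle.} The hard part will be Step 1: quantifying how high-degree vertices interact with the pathwidth-witnessing structure. The naive bound ``$\pw$ drops by at most one per vertex deletion'' is too weak if $G$ has many vertices of moderately high degree. I expect the crucial technical step to be a Ramsey-type argument on the branch vertices of the subdivision $S$: identify a large sub-subdivision of $T_m$ whose branch vertices are all ``safely avoided'' by the high-degree vertices of $G$ (so that those vertices can be deleted without destroying pathwidth), while ensuring that the dependencies only involve $k$ and $n$, and not $|V(G)|$. Separating ``benign'' high-degree vertices (safe to delete) from ``essential'' ones (whose local neighborhood analysis directly yields the induced subdivision or line graph structure) is where minor-freeness must be exploited most carefully.
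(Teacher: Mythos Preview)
Your route is genuinely different from the paper's, but Step~1 is where all the content lies, and the sketch you give does not establish it. The ``delete high-degree vertices one at a time, pathwidth drops by at most one per deletion'' argument cannot work: the number of high-degree vertices in a $K_n$-minor-free graph is not bounded in terms of its pathwidth. For a concrete obstruction, subdivide every edge of $T_m$ many times and attach a fan (a new vertex adjacent to all subdivision vertices on that edge) over each subdivided edge; the result is planar, has pathwidth $\Theta(m)$, yet has roughly $2^m$ vertices of arbitrarily large degree. Your Ramsey idea (``only boundedly many vertices have neighbours across many branches of $S$'') targets only the \emph{spread} high-degree vertices and says nothing about the unboundedly many \emph{concentrated} ones, like the fan apices above. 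So the separation into ``benign'' and ``essential'' high-degree vertices that you propose does not, as stated, yield a bound depending only on $k$ and $n$.

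Worse, your Step~1 is essentially equivalent to the theorem: once \cref{MainTheoremPWMinor} is proved, the induced subdivision of $T_k$ (or its line graph) \emph{is} an induced subgraph of maximum degree at most $4$ and pathwidth $\geq \lceil k/2\rceil$, so Step~1 follows; conversely you are using Step~1 to prove the theorem. The paper sidesteps this circularity entirely. It never produces a bounded-degree induced subgraph. Instead it takes a minor model of the $k$-ary height-$k$ tree $\mathcal{T}_{g}$ (from \cref{ExcludeForestPW}), contracts the branch sets to obtain an induced minor $G'$ of $G$, and applies the Atminas--Lozin Ramsey lemma (\cref{NtreeRamsey}) to $G'$: since $G$ is $K_n$-minor-free, the $K_n$ and $K_{n,n}$ outcomes are impossible, so $G'$ contains $\mathcal{T}_k$ as an induced subgraph, hence $G$ contains $T_k$ as an induced minor, and \cref{InducedMinortoInducedSubgraph} finishes. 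The key difference is that the paper handles high-degree vertices by contracting them away inside the minor model rather than by trying to delete them from $G$.
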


In addition, we characterise when a hereditary graph class defined by a finite set of forbidden induced subgraphs has bounded pathwidth. Let \defn{$\G_S$} be the class of graphs that contain no graph in S as an induced subgraph. We call $K_{1,3}$ a \defn{claw}. A \defn{fork} is a subdivision of a claw and a \defn{semi-fork} is the line graph of a fork. A \defn{tripod} is a forest where each component is a fork and a \defn{semi-tripod} is a graph where each component is a semi-fork. We prove the following characterisation for when $\G_S$ has bounded pathwidth.

\begin{restatable}{thm}{FiniteSetPW}
\label{thm:FiniteSetPW}
   $\G_S$ has bounded pathwidth if and only if $S$ includes a complete graph, a complete bipartite graph, a tripod and a semi-tripod.
\end{restatable}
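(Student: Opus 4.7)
My plan is to prove both directions separately.

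For the \emph{only if} direction, I would argue the contrapositive: if $S$ omits one of the four types, $\G_S$ has unbounded pathwidth. When $S$ contains no complete graph, $\{K_n\}_n \subseteq \G_S$; when no complete bipartite graph, $\{K_{n,n}\}_n \subseteq \G_S$. When $S$ contains no tripod, I would use subdivisions of $T_k$ with sufficiently long subdivisions: being trees, they avoid $K_3$ (hence all $K_n$ with $n\ge 3$), are $C_4$-free (hence $K_{s,s}$-free for $s\ge 2$), and triangle-free (hence avoid any semi-tripod, which contains a triangle); their pathwidth is unbounded by the excluded forest minor theorem. When $S$ contains no semi-tripod, I would use line graphs of the same subdivisions: every line graph is claw-free (hence tripod-free), line graphs of trees are block graphs (hence $C_4$-free, so $K_{s,s}$-free for $s\ge 2$), their clique number is at most $3$, and their pathwidth remains unbounded.

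For the \emph{if} direction, let $S \supseteq \{K_t, K_{s,s}, T, L\}$ and $G \in \G_S$. The plan is to reduce to the excluded-minor setting and invoke \cref{MainTheoremPWMinor}. I would first show that $G$ is $K_n$-minor-free for some $n = n(t,s,T)$. A Ramsey argument on each side of a large $K_{m,m}$-subgraph shows that excluding both $K_t$ and $K_{s,s}$ as induced subgraphs forces no $K_{r,r}$ as a subgraph, for $r = R(t-1,s)$: a clique on either side would combine with the other side to give $K_t$ induced, while independent sets of size $s$ on both sides together induce $K_{s,s}$. If $G$ had a $K_n$-minor for $n$ sufficiently large, then classical minor-to-topological-minor reductions (Mader, Kostochka--Thomason) together with $K_t$-freeness extract a subdivision of $K_m$ in $G$ with every edge properly subdivided. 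The $K_{s,s}$-induced-freeness then lets me choose well-separated branch vertices whose local fork structures are vertex-disjoint with no edges between them, yielding an induced tripod larger than $T$---a contradiction. Once $G$ is $K_n$-minor-free, applying \cref{MainTheoremPWMinor} with $k$ chosen so that every subdivision of $T_k$ contains $T$ induced and every line graph of such contains $L$ induced gives the desired pathwidth bound.

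I expect the main obstacle to be the induced-tripod extraction from a clique minor. A $K_n$-minor only guarantees edges between branch sets, not the absence of ``extra'' edges that would destroy inducedness of a tripod; indeed, the example of $K_{s,s}$ itself (which has large clique minors but no multi-component induced tripod) shows that the minor condition alone is insufficient. Carefully coordinating $K_t$-freeness (to force topological minors with subdivided edges), $K_{s,s}$-induced-freeness (to prevent bipartite contamination between chosen forks), and $T$-induced-freeness (as the target) is the most delicate part of the argument.
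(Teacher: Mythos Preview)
Your high-level strategy for the \emph{if} direction matches the paper's: show that every $G\in\G_S$ is $K_n$-minor-free for some $n$ depending only on $S$, and then invoke \cref{MainTheoremPWMinor}. The difference is in how that minor-freeness is obtained. The paper does not argue it directly; it quotes the Lozin--Razgon result that $\G_S$ already has \emph{bounded treewidth}, which immediately gives $K_{w+2}$-minor-freeness. You instead try to derive $K_n$-minor-freeness from scratch, and this is where your proposal has a genuine gap.

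The failure is at your step~2, ``$K_n$-minor $\Rightarrow$ $K_m$-subdivision''. Neither Mader's theorem nor Kostochka--Thomason gives this implication: those results go from large average degree to minors/subdivisions, not from a minor back to a subdivision. And the implication is simply false under the hypotheses you have in hand at that point. Take any $3$-regular graph of girth at least~$6$ with good expansion (bipartite Ramanujan graphs, say). Such a graph is $K_3$-free and has no $K_{2,2}$ subgraph, so your step~1 is satisfied; being an expander it has $K_n$-minors for arbitrarily large $n$; but having maximum degree~$3$ it contains no subdivision of $K_5$. So the chain ``large clique minor $\to$ large clique subdivision $\to$ induced tripod'' breaks at the first arrow. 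These expanders do, of course, contain induced tripods---so the desired contradiction is still there---but your proposed mechanism does not reach it. Proving directly that $\{K_t,K_{s,s},T,L\}$-free graphs exclude a fixed clique minor is essentially the content of the Lozin--Razgon theorem, and your sketch does not supply a substitute argument. The paper sidesteps all of this by citing that theorem.

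For the \emph{only if} direction, the paper again just cites Lozin--Razgon, whereas you give explicit witnessing families. Your constructions are the natural ones and are correct in spirit; just be aware that in the ``no tripod'' case the long subdivisions of $T_k$ still contain every bounded-size forest whose components are paths or single forks, so a little care with the definitions (paths as degenerate forks, as in Lozin--Razgon's conventions) is needed to make the argument airtight.
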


\section{Preliminaries} 
For undefined terms and notations, see the textbook by \citet{diestel2017graphtheory}. A \defn{class} of graphs $\G$ is a set of graphs that is closed under isomorphism. $\G$ is \defn{hereditary} if it is closed under induced subgraphs. For a graph $G$ and vertex $v\in V(G)$, let \defn{$N(v)$} denote the set of all vertices in $V(G)$ adjacent to $v$ and let \defn{$N[v]$} denote $N(v)\cup \{v\}$.

Let $G$ and $H$ be graphs and let $r\geq 0$ be an integer. $H$ is a \defn{minor} of $G$ if a graph isomorphic to $H$ can be obtained from $G$ by vertex deletion, edge deletion, and edge contraction. If $H$ is isomorphic to a graph obtained from $G$ by vertex deletion and edge contraction, then $H$ is an \defn{induced minor} of $G$. A \defn{(minor) model} $(X_v:v\in V(H))$ of $H$ in $G$ is a collection of subsets of $V(G)$ such that (i) $G[X_v]$ is a connected subgraph of $G$; (ii) $X_u\cap X_v=\emptyset$ for all distinct $u,v\in V(H)$; and (iii) $G[X_u\cup X_v]$ is connected for every edge $uv \in E(H)$. If, in addition, $G[X_u\cup X_v]$ is disconnected whenever $uv\not \in E(H)$, then $(X_v:v\in V(H))$ is an \defn{induced minor model} of $H$. It is folklore that $H$ is a minor of $G$ if and only if $G$ contains a model of $H$ and that $H$ is an induced minor of $G$ if and only if $G$ contains an induced minor model of $H$. If there exists a model $(X_v:v\in V(H))$ of $H$ in $G$ such that $G[X_v]$ has radius at most $r$ for all $v \in V(H)$, then $H$ is an \defn{$r$-shallow minor} of $G$. A graph $G'$ is a \defn{subdivision} of $G$ if $G'$ can be obtained from $G$ by replacing each edge $uv$ of $G$ by a path with end-vertices $u$ and $v$ whose internal vertices are new vertices private to that path. We call the vertices $V(G)\subseteq V(G')$ the \defn{original vertices} in $G'$.

For a tree~$T$, a \defn{$T$-decomposition} of a graph~$G$ is a collection~${\W = (W_x \colon x \in V(T))}$ of subsets of~${V(G)}$ indexed by the nodes of~$T$ such that
(i) for every edge~${vw \in E(G)}$, there exists a node~${x \in V(T)}$ with~${v,w \in W_x}$; and 
(ii) for every vertex~${v \in V(G)}$, the set~${\{ x \in V(T) \colon v \in W_x \}}$ induces a (connected) subtree of~$T$. 
The \defn{width} of~$\W$ is $\max\{ |W_x| \colon x \in V(T) \}-1$. 
A \defn{tree-decomposition} is a $T$-decomposition for any tree~$T$. A \defn{path-decomposition} is a $P$-decomposition for any path~$P$. 
The \defn{treewidth}~$\tw(G)$ of a graph~$G$ is the minimum width of a tree-decomposition of~$G$ and the \defn{pathwidth} $\pw(G)$ of a graph~$G$ is the minimum width of a path-decomposition of $G$. Clearly $\tw(G)\leq \pw(G)$ for all graphs $G$.
Treewidth and pathwidth are the standard measures of how similar a graph is to a tree and to a path respectively. A fundamental result for pathwidth is the excluded forest minor theorem.
\begin{thm}[\cite{robertson1983graph,bienstock1991quickly}]\label{ExcludeForestPW}
    For every forest $F$, every graph with pathwidth at least $|V(F)|-1$ contains $F$ as a minor.
\end{thm}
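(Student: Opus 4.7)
The plan is to prove the contrapositive by induction on $n := |V(F)|$: if $F$ is not a minor of $G$, then $\pw(G) \leq n - 2$. The base case $n = 1$ is immediate, since any nonempty graph contains a single-vertex forest as a minor, so an $F$-free graph is empty and has pathwidth at most $-1$.

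For the inductive step ($n \geq 2$), I would reduce $F$ by removing one vertex. If $F$ has an isolated vertex $v$, set $F' := F - v$; otherwise pick a leaf $\ell$ with neighbor $u$ and set $F' := F - \ell$. By induction, any graph $H$ with $\pw(H) \geq n - 2$ contains $F'$ as a minor. The task is then to promote an $F'$-model $(X_w : w \in V(F'))$ in $G$ to an $F$-model by producing one more branch set: connected, vertex-disjoint from the existing branch sets, and (in the leaf case) adjacent to $X_u$.

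To make this work, I would strengthen the inductive statement into a decorated form: whenever $\pw(G) \geq n - 1$, there is an $F'$-model together with a vertex of $G$ lying outside $\bigcup_w X_w$ and adjacent to $X_u$ (no adjacency requirement in the isolated-vertex case). To establish this, fix a path decomposition $(B_1,\dots,B_m)$ of $G$ of width exactly $\pw(G) \geq n - 1$ and sweep along it. The left and right truncated decompositions $(B_1,\dots,B_i)$ and $(B_i,\dots,B_m)$ induce path decompositions of $G[B_1 \cup \dots \cup B_i]$ and $G[B_i \cup \dots \cup B_m]$; a bag-counting argument produces an index $i$ at which one side still supports pathwidth $\geq n - 2$, so induction places an $F'$-model on that side, and the bag $B_i$ (of size $\geq n$) supplies an extra vertex that can serve as the branch set for $\ell$ (respectively $v$), adjacent to $X_u$ because $B_i$ is a clique-like separator in the decomposition.

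The main obstacle is the disconnected case, where the same sweep must peel off an entire component $F_1$ of $F$ disjointly from the rest and still leave a suffix of the decomposition of pathwidth $\geq |V(F \setminus F_1)| - 1$ on which to recurse. Controlling how pathwidth splits when a path decomposition is cut at a chosen bag, and ensuring the cut leaves enough room on both sides, is the crux of the argument; it is precisely this linear ordering of bags that makes the theorem hold for pathwidth and forests, and its absence is why the analogous statement for treewidth demands grids rather than trees.
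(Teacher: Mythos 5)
The paper does not prove this statement --- it is stated with a citation to Robertson--Seymour (1983) and Bienstock, Robertson, Seymour, Thomas (1991, ``Quickly excluding a forest''), so there is no in-paper argument to compare against. Judged on its own, your sketch has a genuine gap at exactly the step it leans on hardest: the claim that the spare vertex drawn from $B_i$ is ``adjacent to $X_u$ because $B_i$ is a clique-like separator.'' Bags of a path-decomposition are separators, but they are \emph{not} cliques, and a path-decomposition of width $n-1$ imposes no edges within any bag. The vertex you peel off from $B_i$ may be nonadjacent to every branch set of the $F'$-model; nothing in the sweep even forces $X_u$ to meet $B_i$, let alone to meet it at a neighbour of the spare vertex. (Concretely, $G$ could be a disjoint union of a high-pathwidth block far from $B_i$'s extra vertex, with that extra vertex isolated or attached elsewhere.) So the inductive step does not close. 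Relatedly, the ``decorated'' hypothesis is not actually a strengthening: an $F'$-model together with an unused vertex adjacent to $X_u$ \emph{is} an $F$-model (take $\{z\}$ as the branch set for the leaf $\ell$), so you are restating the goal rather than handing induction anything extra it can carry forward.

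Your diagnosis of where the difficulties lie --- controlling which vertex of the partially built forest is exposed at the frontier, and peeling off whole components for the disconnected case --- is accurate; these are precisely the points the cited proofs handle with additional machinery. The known arguments maintain an explicit invariant about which branch set meets the current bag (for instance by growing the model from one end of the decomposition while keeping a designated ``active'' branch set inside the frontier bag), or they route through the node-search / vertex-separation-ordering characterisation of pathwidth. A bare truncate-and-recurse on a path-decomposition, with no control over the location of $X_u$ relative to $B_i$, does not supply the adjacency your construction requires.
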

Note that \cref{ExcludeForestPW} implies that every graphs with pathwidth at least $|V(T_k)|-1$ contains a subdivision of $T_k$ as a subgraph since $T_k$ have maximum degree $3$. 
\section{Results}

\subsection{From Induced Minors to Induced Subgraphs}
To prove \cref{MainTheoremPW,MainTheoremPWMinor}, we first construct an induced minor of a large complete binary tree in our graph. We use the following lemma to then find our desired induced subgraphs.

\begin{lem}\label{InducedMinortoInducedSubgraph}
Every graph $G$ that contains $T_{f(k)}$ as an induced minor contains a subdivision of $T_k$ or the line graph of a subdivision of $T_k$ as an induced subgraph where $f(k)=2k^2+10k$.
\end{lem}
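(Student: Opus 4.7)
The plan is to analyze the structure of each bag of the induced minor model, classify the internal nodes of $T_N$ by the ``shape'' of their bag, apply a Ramsey-type pigeonhole to extract a large subtree $T_k \subseteq T_N$ whose internal nodes share a type, and then build the desired induced subgraph.

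First I would fix an induced minor model $(X_v : v \in V(T_N))$ of $T_N$ in $G$ with $N = f(k) = 2k^2 + 10k$. For each non-root, non-leaf node $v$ of $T_N$ (which has three neighbours in $T_N$), I would pick \emph{bridge} vertices $b_v^1, b_v^2, b_v^3 \in X_v$, each having a $G$-neighbour in the respective neighbouring bag. Since $G[X_v]$ is connected, one can carve out of $G[X_v]$ an induced subtree containing the three bridges and whose leaves lie among them: take an induced shortest $b_v^1 b_v^2$-path, then attach $b_v^3$ via a shortest path, detouring over chord vertices when necessary. Up to elementary case analysis (with a mild modification when chord edges between the two would-be legs otherwise destroy induced-ness, as happens for example when $G[X_v]$ contains two triangles glued at a vertex), this yields one of two canonical shapes: (A) an induced spider, i.e.\ a subdivision of $K_{1,3}$ with the bridges as leaves and some centre $c_v$; or (B) an induced path containing all three bridges, in which some bridge $m_v$ is an internal vertex and hence a ``meeting point'' with edges in three distinct directions (two inside $X_v$, one to the adjacent bag).

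Next I would apply a Ramsey-style pigeonhole on $T_N$. Classifying each internal node by its type, together with a secondary refinement (for example, which of the three bridges is central in case~(B)), produces a colouring of the internal nodes by a constant number of classes. A standard induction shows that any $c$-colouring of the internal nodes of $T_{O(ck)}$ admits a monochromatic $T_k$ subtree, so we obtain a large $T_k \subseteq T_N$ with uniform type. The quadratic factor in $f(k) = 2k^2 + 10k$ should come from synchronising bridge choices across adjacent levels: at each of the $k$ levels of the constructed $T_k$, we must align bridges so that the chosen bag-to-bag edges are the \emph{only} adjacencies among our selected vertices in the two bags, which costs a further $O(k)$ factor per level.

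With a monochromatic $T_k$ in hand the construction is natural. In the all-type-(A) case, the spider centres $c_v$ serve as the branching vertices of an induced subdivision of $T_k$: for each edge $vu$ of $T_k$, the corresponding subdivision path concatenates the relevant leg of the spider in $X_v$, the chosen bridge edge between $X_v$ and $X_u$, and the relevant leg in $X_u$. In the all-type-(B) case, each meeting point $m_v$ together with its three direction-neighbours produces a triangle in $L(G)$ (three edges of $G$ sharing the common endpoint $m_v$), and these triangles play the role of the triangles at degree-$3$ vertices of the line graph of a subdivision of $T_k$; the subdivision paths correspond to induced walks in $G$ passing from one meeting point to the next through the bags. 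The principal obstacle throughout is ensuring induced-ness across bag boundaries: the induced minor property forbids edges between \emph{non-adjacent} bags, but adjacent bags may share many edges, so an unconsidered choice of bridges or in-bag paths could create chords on the constructed paths or spurious adjacencies that destroy the triangle structure in $L(G)$. Controlling this is the technical heart of the argument and accounts for the specific shape of the bound $f(k) = 2k^2 + 10k$.
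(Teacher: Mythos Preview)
Your (A)/(B) dichotomy does not capture the distinction that actually drives the two outcomes of the lemma. In both of your types the branching is realised by a single degree-$3$ vertex of $G$ (the spider centre $c_v$ in (A), the middle bridge $m_v$ in (B)), so either one, if it could be made to work, would yield an induced \emph{subdivision} of $T_k$. Your argument that type~(B) yields the line-graph outcome confuses $L(G)$ with $G$: three edges sharing the endpoint $m_v$ do form a triangle in $L(G)$, but the lemma asks for the line graph of a subdivision of $T_k$ as an induced subgraph of $G$ itself, and that graph contains genuine triangles. Nothing in your type~(B) produces a triangle in $G$. Conversely, the situation you wave away as a ``mild modification'' --- when chord edges make it impossible to find an induced tree in the bag connecting the three bridges (for instance, $G[X_v]$ is itself a triangle on the three bridges) --- is precisely where triangles are forced, and is the entire reason the line-graph alternative appears in the conclusion.

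The paper's local dichotomy is instead \emph{fork versus semi-fork}: using a cited structural lemma, one shows that three designated attachment vertices are the leaves of either an induced subdivided claw or an induced net (a triangle with three pendant paths). This is applied not inside a single bag but across a block of four consecutive levels of the induced-minor model (hence a factor $4$), and produces an induced \emph{wattle} $\tilde T_k$: a subdivision of $T_k$ in which an arbitrary subset of branch vertices has been replaced by triangles. Only then does the Ramsey step enter, as a $2$-colouring (claw vs.\ triangle) of the branch vertices of a complete binary tree; extracting a monochromatic vertical copy of $T_k$ from a $2$-coloured $T_h$ requires $h = (k^2+5k)/2$, and $4\cdot (k^2+5k)/2 = 2k^2+10k$. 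Thus the quadratic factor comes from this two-colour tree-Ramsey step, not from ``synchronising bridge choices across levels'' as you suggest.
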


The rest of this subsection is dedicated to proving \cref{InducedMinortoInducedSubgraph}. A \defn{net-graph} is a semi-fork obtained from a triangle by appending disjoint paths of length $1$ at each vertex of the triangle. For a graph $G$ and a vertex $v\in V(G)$ with degree $3$ and neighbours $a,b,c$, a \defn{net graph replacement} at $v$ is the graph $G'$, with $V(G')=V(G)\setminus\{v\}\cup \{x,y,z\}$ where $x,y,z$ are new vertices and $E(H)=E(G-v)\cup \{xy,yz,zx,xa,yb,zc\}$. A \defn{wattle $\tilde{T}_{k}$} is obtained from a subdivision of $T_k$ by picking a (possibly empty) subset $X$ of the degree $3$-vertices and performing net graph replacements at each vertex in $X$. The following lemma from \citet{AAKST2021evenhole} will be useful in constructing an induced wattle $\tilde{T}_{k}$ from a large induced minor.

\begin{lem}[\cite{AAKST2021evenhole}]\label{CleaningInducedMinor}
 Let $G$ be a connected graph whose vertex set is partitioned into connected sets $A,B,C,\{a\},\{b\},\{c\}$ and $S$. Suppose that every edge of $G$ has either both ends in one of the sets, or is from $\{a\}$ to $A$, from $\{b\}$ to $B$, from $\{c\}$ to $C$, or from $S$ to $A\cup B\cup C$. Then $a$, $b$, $c$ are the degree one vertices of some induced fork or semi-fork in $G$.
\end{lem}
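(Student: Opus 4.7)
The strategy divides into two phases: (i) use \cref{CleaningInducedMinor} to extract an induced wattle $\tilde{T}_{f(k)}$ inside $G$, and (ii) apply a Ramsey-type argument on binary trees to pull out of this wattle either an induced subdivision of $T_k$ (when the branching vertices of the selected sub-wattle are un-replaced forks) or the induced line graph of a subdivision of $T_k$ (when they are all triangle replacements).

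\emph{Phase 1: Wattle construction.} Fix an induced minor model $(X_v : v \in V(T_{f(k)}))$ of $T_{f(k)}$ in $G$. For each non-root internal vertex $v$ of $T_{f(k)}$ with parent $p$ and children $c_1,c_2$, form the three unions of bags corresponding to the component of $T_{f(k)}-v$ containing $p$, the subtree rooted at $c_1$, and the subtree rooted at $c_2$; these are pairwise non-adjacent in $G$ by the induced-minor property. Inside each of these three unions, pick a single vertex $a$, $b$, $c$ lying in a bag far from $v$, so that (again by the induced-minor property) its only neighbours are inside the corresponding union; set $S=X_v$ and let $A,B,C$ be the three unions with $a,b,c$ removed. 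This exactly matches the hypothesis of \cref{CleaningInducedMinor}, which then supplies an induced fork or semi-fork at $v$ with leg endpoints $a,b,c$. Iterating this across all branching vertices of $T_{f(k)}$ and reusing common path segments between adjacent branching vertices gives an induced subgraph of $G$ isomorphic to a wattle $\tilde{T}_{f(k)}$ in which each non-root internal vertex is classified as either \emph{fork} (un-replaced) or \emph{semi-fork} (triangle-replaced).

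\emph{Phase 2: Monochromatic sub-wattle.} Colour each non-root internal vertex of $T_{f(k)}$ by its fork/semi-fork type. A Ramsey-style induction on binary trees, in which each level of $T_k$ is built using roughly $2k$ extra depth in $T_{f(k)}$ (the cost of locating, below a colour-$c$ vertex, two colour-$c$ topological subtrees), produces a topological embedding of $T_k$ into $T_{f(k)}$ in which every branching image is of the same colour; the quadratic bound $f(k)=2k^2+10k$ arises from summing these per-level costs over $k$ levels, with $10k$ absorbing the slack needed to initialise and to handle the root.

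\emph{Phase 3: Extraction.} If the monochromatic colour is \emph{fork}, then at each branching image of $T_k$ the wattle has an un-replaced vertex; along each connecting path, whenever we pass through a branching vertex of $T_{f(k)}$ that is replaced, we include two of the three triangle vertices (on the through-path) and drop the third, yielding an induced path, and globally an induced subdivision of $T_k$. Symmetrically, if the colour is \emph{semi-fork}, we keep all three triangle vertices at branching images (forming the triangles demanded by the line graph) and handle pass-throughs as before, obtaining an induced line graph of a subdivision of $T_k$ inside $G$.

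\emph{Main obstacle.} The hardest step is the consistent globalisation in Phase 1: the local fork/semi-fork produced by \cref{CleaningInducedMinor} at $v$ must share path segments with those produced at adjacent branching vertices, otherwise the union need not be induced and need not be a wattle. Handling this requires careful choice of representatives, thinning each bag $X_v$ down to a minimal connected backbone joining its three attachment points, and applying the cleaning recursively from the leaves upward so that each lower application constrains the path segment re-used by the next. The Ramsey estimate in Phase 2 is secondary in difficulty but must be pinned down precisely to deliver exactly the constant and degree in $f(k)=2k^2+10k$.
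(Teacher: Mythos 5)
Your proposal does not prove the statement you were asked to prove. The target statement is \cref{CleaningInducedMinor} itself: given a connected graph $G$ whose vertex set is partitioned into connected sets $A,B,C,\{a\},\{b\},\{c\},S$ with the stated adjacency constraints, one must exhibit an induced fork or semi-fork in $G$ with degree-one vertices $a,b,c$. Your write-up instead outlines a proof of \cref{InducedMinortoInducedSubgraph} (how an induced $T_{f(k)}$-minor yields an induced subdivision of $T_k$ or its line graph), and in Phase~1 it explicitly \emph{invokes} \cref{CleaningInducedMinor} as a black box. This is circular with respect to the task: you cannot prove the lemma by assuming it.

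A correct argument for \cref{CleaningInducedMinor} would be purely local. Sketch of the kind of reasoning required: since $G$ is connected and $a,b,c$ only attach to $A,B,C$ respectively, any walk between $a$ and $b$ must pass through $S$ (the only set adjacent to more than one of $A,B,C$). Take shortest paths $P_a,P_b,P_c$ from $a,b,c$ into $S$ through $A,B,C$ respectively, ending at $s_a,s_b,s_c\in S$, chosen so that the interiors use only one vertex of $S$ each, and then connect $s_a,s_b,s_c$ by a minimal connected subgraph inside $S$. One must then do a careful case analysis on the shape of that connecting structure (a single vertex, a path, or a subdivided claw inside $S$, plus possible chords created by $S$-to-$(A\cup B\cup C)$ edges) to show the union is an induced fork or, when a triangle arises at the meeting point, an induced semi-fork. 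None of this appears in your proposal; what you wrote belongs to the proof of a downstream lemma, not to the cited cleaning lemma itself.
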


For a rooted tree $(T,r)$, a vertex $u\in V(T)$ is an \defn{ancestor} of
$v\in V(T)$ (and $v$ is a \defn{descendant} of $u$) if $u$ is a vertex on the $(v,r)$-path in $T$. If $p_v\in V(T)$ is the neighbour of $v$ on the $(v,r)$-path in $T$, then $p_v$ is the \defn{parent} of $v$ (and $v$ is a \defn{child} of $p_v$). For a rooted complete binary tree $(T_k,r)$, every vertex $u\in V(T_k)$ with degree at least $2$ has a \defn{left child $\ell_v$} and a \defn{right child $c_v$}. A vertex $v\in V(T_k)$ is a \defn{left (right) descendent} of $u$ if it is a descendant of the left (right) child of $u$.

\begin{lem}\label{MinorToWattle}
 If a graph $G$ contains $T_{4k}$ as an induced minor, then $G$ contains a wattle $\tilde{T}_{k}$ as an induced subgraph.
\end{lem}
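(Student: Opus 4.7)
The plan is to embed a copy of $T_k$ inside $T_{4k}$ so that each $T_k$-edge is realised by a $T_{4k}$-path of length $4$, then apply \cref{CleaningInducedMinor} at each degree-$3$ vertex of the embedded $T_k$ to produce a local induced fork or semi-fork, and finally concatenate these local structures through the intermediate bags to obtain an induced wattle.

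First I identify $T_k$ with a subtree of $T_{4k}$ whose vertices lie at $T_{4k}$-depths $0, 4, \ldots, 4k$, choosing each selected vertex's two $T_k$-children to be specific descendants at $T_{4k}$-distance $4$. Given the induced minor model $(X_u : u \in V(T_{4k}))$, each $T_k$-edge $vw$ corresponds to a $T_{4k}$-path $v = q_0, q_1, q_2, q_3, q_4 = w$; call $X_{q_1}, X_{q_3}$ its \emph{near} bags and $X_{q_2}$ its \emph{middle} bag. A key feature of induced minor models is that non-adjacent bags of $T_{4k}$ have no edges between them in $G$, which rules out many potential chords a priori.

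At each degree-$3$ vertex $v$ of $T_k$ with $T_k$-neighbours $p, \ell, r$, I apply \cref{CleaningInducedMinor} with $S = X_v$, with $A, B, C$ the three near bags incident to $v$, and with $\{a\}, \{b\}, \{c\}$ chosen to be single vertices of the three corresponding middle bags, each adjacent to the respective near bag. The non-edge condition between non-adjacent bags of $T_{4k}$ (in particular, each middle bag is at $T_{4k}$-distance at least $2$ from $X_v$, from the other two near bags, and from the other two middle bags) verifies the hypotheses of the lemma. This yields an induced fork or semi-fork $F_v$ in $G$ whose leaves $a_v^p, a_v^\ell, a_v^r$ lie in the respective middle bags and whose branching structure (either a single vertex or a triangle) lies in $X_v$.

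For each $T_k$-edge $vw$ both leaves $a_v^w$ and $a_w^v$ lie in the shared middle bag $X_{q_2}$, and I connect them by an induced path $Q_{vw}$ inside $G[X_{q_2}]$ (the trivial path if they coincide). The proposed wattle is the union of all the $F_v$'s and $Q_{vw}$'s. The main technical obstacle is showing that this union is induced in $G$: the only potential cross-chords are between a near bag and its incident middle bag on the same $T_k$-edge, since all other combinations of bags in play are non-adjacent in $T_{4k}$ and so have no edges in $G$. These remaining chords can be eliminated by choosing the $F_v$'s and $Q_{vw}$'s to be minimal, so that any surviving chord would yield a strictly shorter wattle-edge path from $F_v$'s branching structure to $F_w$'s, contradicting minimality. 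The resulting induced subgraph is a subdivision of $T_k$ in which each degree-$3$ vertex either remains a single vertex (fork case) or is replaced by a triangle (semi-fork case), which is exactly a wattle $\tilde{T}_k$.
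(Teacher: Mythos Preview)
Your overall strategy is sound and closely parallels the paper's, though you place the application of \cref{CleaningInducedMinor} at the $T_k$-vertex bag $X_v$ (with the near bags as $A,B,C$), whereas the paper applies it one level deeper, with $S$ an intermediate bag and the endpoints $a,b,c$ equal to the carefully pre-chosen vertices $w_{v,m},w_{b,0},w_{c,0}$. Both placements can be made to work.

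The genuine gap is in your final paragraph. You assert that ``choosing the $F_v$'s and $Q_{vw}$'s to be minimal'' eliminates the remaining chords between a near bag $X_{q_1}$ and the middle bag $X_{q_2}$, but you neither specify over what family you are minimising nor explain why a chord yields a strictly shorter object \emph{of the same type}. The difficulty is that $F_v$ is produced by \cref{CleaningInducedMinor} rather than chosen freely; if you reroute one arm of $F_v$ through a chord, you must re-verify that the modified arm, together with the two unchanged arms, is still an induced fork or semi-fork. As written, the argument is circular.

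A clean fix---and this is exactly the mechanism behind the paper's minimal paths $P_v$---is to reverse the order of your choices. For each $T_k$-edge $vw$ with $T_{4k}$-path $v,q_1,q_2,q_3,w$, \emph{first} take a vertex-minimal path $Q_{vw}$ in $G[X_{q_2}]$ whose endpoints are adjacent to $X_{q_1}$ and $X_{q_3}$ respectively; minimality then forces that only those endpoints touch the near bags. \emph{Then} set $a_v^w$ and $a_w^v$ to be these endpoints, and only now apply \cref{CleaningInducedMinor} at $v$ with these specific leaves. Since $a_v^w$ is the unique vertex of $Q_{vw}$ adjacent to $X_{q_1}$, and since $a_v^w$ has degree~$1$ in the induced $F_v$, no chord between $F_v$ and $Q_{vw}$ can occur. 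With this ordering the inducedness of the whole wattle follows from the non-adjacency of the remaining bag pairs, exactly as you outlined. You should also add a sentence handling the root of $T_k$ (degree~$2$) and the leaves (degree~$1$), where the full lemma is not needed.
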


\beginproof\ 
    Let $(X_v\colon v\in V(T_{4k}))$ be an induced minor model of $(T_{4k},r)$ in $G$. Let $V_0,V_1,\dots,V_{4k}$ be a \textsc{bfs}-layering of $T_{4k}$ where $V_0=\{r\}$.\footnote{$V_0,V_1,\dots,$ is a \defn{\textsc{bfs}-layering} of a graph $G$ if $V_0 = \{r\}$ for some $r\in V(G)$ and $V_i=\{v\in V(G):\dist_G(v,r)=i\}$ for all $i\geq 1$.} For a vertex $v\in V(T_{4k})$ with degree $3$, let $P_v:=(w_{v,0},w_{v,1}\dots,w_{v,m})$ be a vertex-minimal path in $G[X_v]$ such that $N_G(w_{v,0})\cap X_{p_v}$ and $N_G(w_{v,m})\cap X_{\ell_v}$ are non-empty. By minimality, $w_{v,0}$ is the only vertex in $V(P_v)$ adjacent to vertices in $X_{p_v}$ and $w_{v,m}$ is the only vertex in $V(P_v)$ adjacent to vertices in $X_{\ell_v}$. 
    
     \begin{wrapfigure}{R}{0.35\textwidth}
        \centering
    	\includegraphics[width=0.95\linewidth]{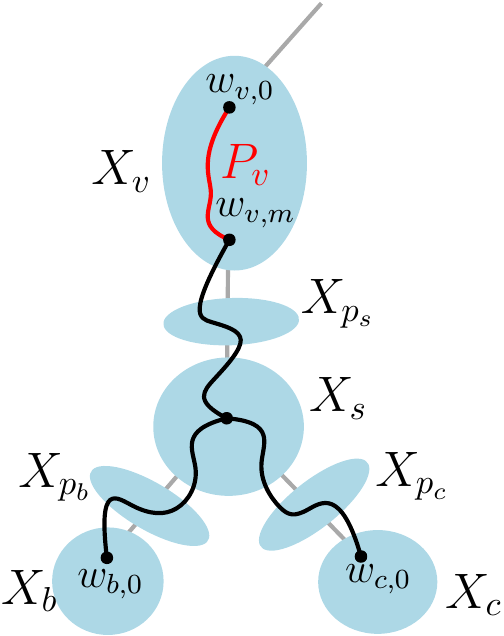}
    	\caption{Extending the wattle $\tilde{T}_k$.}
    	\label{fig:WattleTk}
     \end{wrapfigure}
    
    We prove the following claim by induction on $k\geq 0$: If a graph $G$ contains $T_{4k}$ as an induced minor, then $G$ contains a wattle $\tilde{T}_{k}$ as an induced subgraph whose leaves are contained in $\{w_{v,0}\colon v \in V_{4k}\}$ and every vertex in $V(\tilde{T}_{k})\cap (\bigcup(V(X_v)\colon v\in V_{4k}))$ is a leaf.
    
    For $k=0$, the claim holds trivially by letting $V(\tilde{T}_{k})$ be an arbitrary vertex in $X_r$. For $k=1$, let $a,b\in V_4$ respectively be a left and right descendent of $r$. By taking $V(\tilde{T}_{k})$ to be a vertex minimal $(w_{a,0},w_{b,0})$-path such that $V(\tilde{T}_{k})\cap (\bigcup(V(X_v)\colon v\in V_{4}))=\{w_{a,0},w_{b,0}\}$, we are done.
    
    Now suppose the claim holds for $k-1$. By induction, $T_{4k-4}$ contains a wattle $\tilde{T}_{k-1}$ as an induced subgraph whose leaves are contained in $\{w_{v,0}\colon v \in V_{4k}\}$ and every vertex in $V(\tilde{T}_{k})\cap  (\bigcup(V(X_v)\colon v\in V_{4k}))$ is a leaf. Now consider a leaf $w_{v,0}\in V(\tilde{T}_{k-1})$. First append the path $P_v$ to $\tilde{T}_{k-1}$. Let $s\in V_{4k-2}$ be a left descendants of $v$ and let $b,c\in V_{4k}$ respectively be left and right descendants of $s$. Let $A=X_{p_{s}}$, $B=X_{p_{b}}$, $C=X_{p_{c}}$ and $S=X_{s}$. Since $(X_v\colon v\in V(T_{4k}))$ is an induced minor, we may apply \cref{CleaningInducedMinor} on $G[\{w_{v,m},w_{b,0},w_{c,0}\}\cup A\cup B\cup C \cup S]$ to obtain an induced fork or semi-fork with end points $w_{v,m},w_{b,0},w_{c,0}$. Add this induced fork or semi-fork to $\tilde{T}_{k-1}$ and repeat for all leaves in $\tilde{T}_{k-1}$ to obtain an induced wattle $\tilde{T}_{k}$ that satisfies the induction hypothesis (see \cref{fig:WattleTk}).\qed
 
For a rooted tree $(T,r)$, we say that a rooted subtree $(T',r')$ is \defn{vertical} if $r'$ is an ancestor (with respect to $(T,r)$) of every vertex in $V(T')$. We use the following lemma to clean up our wattle $\tilde{T}_{k}$. 
 
 \begin{lem}\label{MonochromaticCBT}
 For every red-blue colouring of $(T_{f(k)},r)$, there exists a subdivision of a vertical $(T_k,r)$ whose original vertices are monochromatic where $f(k)=(k^2+5k)/2$.
\end{lem}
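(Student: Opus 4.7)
The plan is to prove a symmetric two-parameter strengthening by induction on $k+j$, which then specialises to the lemma by setting $k=j$:

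\textbf{Stronger claim.} For all non-negative integers $k,j$, every red-blue colouring of $(T_{k+j},r)$ contains either a vertical subdivision of $T_k$ whose originals are all red, or a vertical subdivision of $T_j$ whose originals are all blue.

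Setting $k=j$ produces a monochromatic vertical $T_k$ subdivision in $T_{2k}$. Since $2k\le (k^2+5k)/2$ for every $k\ge 0$, the bound $f(k)=(k^2+5k)/2$ stated in the lemma follows immediately (with considerable slack to spare).

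The base case is easy: if $k=0$, then any red vertex is a vertical $T_0$ subdivision, and otherwise the ambient $T_{0+j}=T_j$ is entirely blue and is itself a vertical blue $T_j$ subdivision rooted at $r$; the case $j=0$ is symmetric. For the inductive step I would condition on the colour of $r$. Suppose $r$ is red (the blue case is symmetric, using parameters $(k,j-1)$ in place of $(k-1,j)$). Each of the two subtrees of $r$ is a copy of $T_{k+j-1}$, so by the inductive hypothesis with parameters $(k-1,j)$ each subtree contains either a vertical red $T_{k-1}$ subdivision or a vertical blue $T_j$ subdivision. If either subtree yields the latter we output it directly, as it remains vertical in the ambient tree. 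Otherwise both subtrees contain vertical red $T_{k-1}$ subdivisions rooted at some vertices $u_1, u_2$ in the left and right subtrees of $r$ respectively, and taking $r$ as the new root together with the paths $r\to u_1$ and $r\to u_2$---which are internally disjoint since they pass through different children of $r$---and the two $T_{k-1}$ subdivisions attached at $u_1$ and $u_2$ produces a vertical red $T_k$ subdivision in which every original is red.

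The main conceptual obstacle is choosing the right strengthening. A naive induction on $k$ alone stalls, because after descending one level the two subtrees' recursive outputs may have conflicting colours that cannot both be attached to the root. Carrying both colour parameters in parallel---so that the inductive hypothesis may independently deliver either colour in each recursive call---dissolves this tension, and the sum $k+j$ is the natural height budget that decreases by exactly one at each descent regardless of the root's colour.
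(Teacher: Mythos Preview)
Your proof is correct, and in fact yields the optimal linear bound $2k$ rather than the quadratic $f(k)=(k^2+5k)/2$. The two-parameter strengthening is the standard trick for this type of Ramsey statement in rooted trees, and your verification of the base case and the combining step is sound; in particular, when both subtrees return red $T_{k-1}$ subdivisions rooted at $u_1,u_2$, the tree paths $r\to u_1$ and $r\to u_2$ together with the two subdivisions do assemble into a vertical subdivision of $T_k$ with all originals red, and the disjointness/ancestry checks you sketch go through.

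The paper takes a different route: a one-parameter induction on $k$ with the recursion $f(k)=f(k-1)+k+2$. It reserves the top $k+2$ layers as a buffer, applies induction inside each of the $2^{k+2}$ copies of $T_{f(k-1)}$ below, and then does a three-way case split on where the colours fall (a red left tree and red right tree combine through $r$; otherwise all left trees are blue and either some blue vertex in the buffer lets two of them combine, or the buffer itself is an all-red $T_k$). This avoids the asymmetric two-colour hypothesis but pays for it with a quadratic height and a more delicate case analysis. Your argument is both shorter and quantitatively stronger; the paper's approach has no particular advantage here beyond keeping the induction parameter aligned with the statement as written.
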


\begin{proof}
    We proceed by induction on $k$. For $k=0$, the claim is trivial. Now suppose the claim hold for $k-1$. Let $(V_0,\dots,V_{f(k)})$ be a \textsc{bfs}--layering of $(T_{f(k)},r)$ where $V_0=\{r\}$. Observe that $f(k)=f(k-1)+k+2$. Since each component of $T_{f(k)}[V_{k+2}\cup \dots\cup V_{f(k)}]$ is isomorphic to $T_{f(k-1)}$, by induction, each of these components contain a subdivision of a vertical $T_{k-1}$ whose original vertices are monochromatic. We say that such a subdivided tree is \defn{left} (\defn{right}) if its vertices are left (right) descendants of $r$ and that it is \defn{red} (\defn{blue}) if its original vertices are red (blue).
    
    Without loss of generality, assume that $r$ is coloured red. Now if there is a red left tree $(T_{k-1}^{(1)},r^{(1)})$ and a red right tree $(T_{k-1}^{(2)},r^{(2)})$, then together with the $(r^{(1)},r^{(2)})$-path in $T$ (which goes through $r$), we have a vertical red $(T_k,r)$. So assume without loss of generality that all the left trees are blue. If there is a blue vertex $r'\in V_1\cup \dots \cup V_{k+1}$ that is a left descendent of $r$, then we can constructed a vertical blue $(T_k,r')$ with two of the blue left trees. Otherwise all the left descendants of $r$ in $V_1\cup \dots \cup V_{k+1}$ are red which induces a vertical red $(T_k,r')$ where $r'$ is the left child of $r$.
\end{proof}

\begin{lem}\label{WattletoSubgraph}
Every wattle $\tilde{T}_{f(k)}$ contains a subdivision of $T_{k}$ or the line graph of a subdivision of $T_k$ as an induced subgraph where $f(k)=(k^2+5k)/2$.
\end{lem}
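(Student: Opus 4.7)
The plan is to two-colour the degree-$3$ vertices of the underlying $T_{f(k)}$ according to whether a net-graph replacement has been performed there, and then to apply Lemma~\ref{MonochromaticCBT} to extract a monochromatic vertical subdivision of $T_k$. The two colour classes will correspond exactly to the two conclusions of the lemma.

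Write $\tilde{T}_{f(k)}$ as arising from a subdivision $T'$ of $T_{f(k)}$ together with a set $X$ of degree-$3$ vertices of $T'$ at which net-graph replacements have been performed. Colour a vertex $v$ of $T_{f(k)}$ blue if $v\in X$ and red otherwise, assigning arbitrary colours to the root and the leaves. By Lemma~\ref{MonochromaticCBT} applied with $f(k)=(k^2+5k)/2$, there is a subdivision $S$ of a vertical $T_k$ inside $T_{f(k)}$ whose original vertices all share a common colour.

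If the originals of $S$ are red, then in $\tilde{T}_{f(k)}$ each branch vertex of $S$ is still an honest degree-$3$ vertex. I construct an induced subgraph $H$ by keeping every original vertex of $S$, and, for each internal vertex $w$ of a subdivision-path of $S$, taking either $w$ itself (if $w$ is red) or the two net-graph vertices $x_w,y_w$ that lie on the main path through $w$ while discarding the third net-graph vertex attached to the side branch (if $w$ is blue). Because net-graph replacement affects only edges incident to the replaced vertex, and because the paths of $S$ are internally disjoint tree paths in $T_{f(k)}$, $H$ has no chords and forms an induced subdivision of $T_k$. If instead the originals of $S$ are blue, I repeat the same construction, but at each original vertex I include the entire triangle of its net-graph replacement; the resulting induced subgraph consists of paths joined at triangles in exactly the pattern one gets from taking the line graph of a subdivision of $T_k$, so it is the desired induced copy of $L(T'_k)$ for some subdivision $T'_k$ of $T_k$.

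The main technical point is the induced-ness verification: one must check that between consecutive originals of $S$ the selected vertices span exactly a path (or, in the blue case, a path attached to the surrounding triangles) with no additional edges. This reduces to the observation that every edge introduced by a net-graph replacement has both endpoints inside the local triangle or joins such a triangle to a neighbour of the replaced vertex in $T'$, so chords can only arise locally, and the chosen exclusions (the third net-graph vertex and the side-branch neighbour at each internal path vertex) suppress exactly those. Once this local check is made, all the combinatorial work of the lemma is packaged into Lemma~\ref{MonochromaticCBT}.
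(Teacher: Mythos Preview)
Your argument is essentially the paper's: colour vertices of $T_{f(k)}$ according to whether a net-graph replacement occurred, invoke Lemma~\ref{MonochromaticCBT}, and read off the appropriate induced subgraph from the monochromatic vertical copy of $T_k$. The paper carries out exactly this (with the colours swapped), though it leaves the extraction step as a one-line remark whereas you spell it out.

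Two small points of care. First, the clause ``assigning arbitrary colours to the root and the leaves'' is unnecessary and potentially harmful: these vertices are not in $X$, so they should simply be red under your rule; colouring them blue could land a non-replaced vertex among the ``blue'' originals and break the line-graph construction. Second, in the blue case you should not include the \emph{entire} triangle at the root and at the leaves of $S$: the root of $T_k$ has degree $2$ and its leaves have degree $1$, so in the line graph of a subdivision of $T_k$ these correspond to an edge and to single vertices respectively, not to triangles. Including all three triangle vertices there yields a graph that properly contains, but is not equal to, the desired line graph; the fix is to keep only those triangle vertices whose pendant neighbour lies on a path of $S$. With these adjustments your write-up is correct and matches the paper's proof.
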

\begin{proof}
    Let ${T}_{f(k)}'$ be an auxiliary copy of $T_{f(k)}$ obtained from the wattle $\tilde{T}_{f(k)}$ by first contracting each triangle into a red vertex then contracting the subdivided paths and colouring the remaining vertices blue. By \cref{MonochromaticCBT}, ${T}_{f(k)}'$ contains a subdivision ${T}_{k}'$ of $T_k$ whose original vertices are monochromatic. If the original vertices of ${T}_{k}'$ are red, then the wattle $\tilde{T}_{f(k)}$ contains the line graph of a subdivision of $T_k$ as an induced subgraph (where each triangle in the line graph corresponds to an original red vertex in ${T}_{k}'$). Otherwise the original vertices of ${T}_{k}'$ are blue and thus the wattle $\tilde{T}_{f(k)}$ contains a subdivision of $T_{k}$ as an induced subgraph (where the original vertices correspond to the original blue vertices in ${T}_{k}'$).
\end{proof}

\cref{InducedMinortoInducedSubgraph} immediately follows from \cref{MinorToWattle,WattletoSubgraph}.

\subsection{From Bounded Degree to Induced Minors}\label{SectionWattletoSubgraph}
We now show that graphs with bounded degree and sufficiently large pathwidth contains a large complete binary tree as an induced minor. \cref{MainTheoremPW} immediately follows from the next theorem together with \cref{InducedMinortoInducedSubgraph}.

\begin{thm}\label{MainTheoremInduced}
There is a function $f$ such that every graph with maximum degree $\Delta$ and pathwidth at least $f(k,\Delta)$ contains $T_k$ as an induced minor.
\end{thm}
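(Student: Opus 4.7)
The plan is to apply the excluded forest minor theorem to obtain a subdivision of a very large complete binary tree $T_N$ as a (non-induced) subgraph of $G$, and then exploit the bounded maximum degree to prune this subdivision into an induced minor model of $T_k$. Specifically, I would choose $N = N(k,\Delta)$ suitably large and apply \cref{ExcludeForestPW} with $F = T_N$. Since $T_N$ has maximum degree $3$, every minor containment of $T_N$ is in fact a topological minor containment, so $G$ contains a subgraph $T'$ isomorphic to a subdivision of $T_N$. Call the degree-$3$ vertices of $T'$ the \emph{branch vertices} and the internally-disjoint paths in $T'$ connecting them the \emph{subdivision paths}.

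The task is then to clean up $T'$ to obtain an induced subdivision of $T_k$, which automatically yields an induced minor model of $T_k$ via contraction of the subdivision paths. The obstacles are the edges of $G$ outside $E(T')$: chords inside a single subdivision path, edges between internal vertices of distinct subdivision paths, and adjacencies from branch vertices to non-incident portions of $T'$. Edges leaving $V(T')$ entirely can be ignored, as they do not interfere with the induced minor model. Crucially, since $G$ has maximum degree at most $\Delta$, each vertex of $T'$ has at most $\Delta-2$ excess neighbors, bounding the density of obstructions.

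I would first strengthen the output of \cref{ExcludeForestPW} (by taking $N$ large enough and passing to a sub-subdivision) so that every subdivision path of $T'$ has length at least some threshold $L = L(k,\Delta)$; this gives room to trim the endpoints of each path and escape excess neighbors of the branch vertices. Then, rooting the underlying $T_N$, I would assign each branch vertex a label encoding its pattern of excess adjacencies (for instance, whether it has excess neighbors on its ancestor subdivision path, its two descendant subdivision paths, or other ``cross'' paths), noting that only a bounded number of such patterns exist because $\Delta$ is bounded. A tree-Ramsey argument on $T_N$ (enabled by the size of $N$) then produces a subtree isomorphic to $T_k$ whose branch vertices all carry the same label. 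A final pigeonholing over chord types, combined with the length slack from the first step, extracts an induced subdivision of $T_k$.

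The main obstacle will be designing the label so that a monochromatic subtree simultaneously controls every kind of obstruction—intra-path chords, cross-path edges, and branch-to-branch adjacencies. This forces the label to encode enough local structure that the Ramsey-type argument must inflate $N$ considerably compared to $k$ and $\Delta$. The flexibility gained by insisting subdivision paths are very long is essential, since it lets us reroute or shorten paths to avoid excess neighbors identified by the label without destroying the underlying $T_k$ shape.
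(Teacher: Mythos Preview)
Your plan has a fundamental gap: you aim to extract an \emph{induced subdivision} of $T_k$ from the (non-induced) subdivision $T'$, but this target is strictly stronger than the induced minor the theorem asks for, and it is not attainable in general. Take $G$ to be the line graph of a subdivision of $T_N$. Then $G$ has maximum degree $4$, has pathwidth $\Omega(N)$ (it contains $T_{N-1}$ as a minor), and---since every line graph is claw-free---contains no induced $K_{1,3}$, hence no induced subdivision of $T_k$ for any $k\geq 2$. Yet $G$ does contain $T_{N-1}$ as an induced minor (contract each triangle), so the theorem holds for it. Your cleaning procedure, no matter how the labels and pigeonholing are set up, can never produce an induced subdivision here; the triangles at every potential branch vertex are unavoidable. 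Any correct argument must at some point allow the branch sets of the induced minor model to absorb these triangles rather than insist on degree-$3$ vertices with independent neighbourhoods.

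Even setting this aside, the step ``a final pigeonholing over chord types'' is where the real difficulty hides and your sketch does not address it. The cross-path edges between \emph{internal} vertices of distinct subdivision paths are not governed by the branch-vertex labels at all: each of the $\Theta(L\cdot 2^k)$ internal vertices can send $\Delta-2$ edges to arbitrary other paths, and no local label at a branch vertex records this. The paper sidesteps both issues by a completely different route: it uses Korhonen's sparsifable-graph technique, iteratively passing to induced subgraphs (via distance-$5$ independent sets and \cref{SparsifableDegree}) until every vertex is sparsifable, while controlling the pathwidth loss at each step through \cref{pwdeg3}. In a sparsifable graph one then applies \cref{ExcludeForestPW} and invokes \cref{SparsifableInducedMinors}, which upgrades an ordinary minor of $T_k^+$ to an induced minor of $T_k$ directly---without ever trying to realise it as an induced subdivision.
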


To prove \cref{MainTheoremInduced}, we use sparsifable graphs which is a new technique introduced by \citet{korhonen2022}. For a graph $G$, a vertex $v\in V(G)$ is \defn{sparsifable} if it satisfies one of the following conditions:
\begin{compactenum}
\item $v$ has degree at most $2$;
\item $v$ has degree $3$ and all of its neighbours have degree at most $2$;
\item $v$ has degree $3$, one of its neighbours has degree at most $2$, and the two other neighbours form a triangle with $v$.
\end{compactenum}
We say that $G$ is \defn{sparsifable} if all of its vertices are sparsifable. Such graphs are useful since minors and induced minors are roughly equivalent in this setting. More precisely, \citet{korhonen2022} showed that if a sparsifable graph $G$ contains a graph $H$ with minimum degree at least $3$ as a minor, then $G$ contains $H$ as an induced minor. We prove a slightly stronger version of this result that relaxes the minimum degree condition for $H$. 

\begin{lem}\label{SparsifableInducedMinors}
    Let $H$ and $H^+$ be graphs such that $H$ is an induced subgraph of $H^+$ and each vertex in $V(H)$ has degree at least $3$ in $H^+$. If a sparsifable graph $G$ contains $H^+$ as a minor, then $G$ contains $H$ as an induced minor.
\end{lem}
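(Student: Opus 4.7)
The plan is to follow Korhonen's sparsifiable minor-to-induced-minor template, noting that the only adaptation needed is to discard the ``auxiliary'' branch sets $X_w$ with $w \in V(H^+) \setminus V(H)$ at the end. I would start from any minor model $(X_v : v \in V(H^+))$ of $H^+$ in $G$ and prune it: for each edge $uw \in E(H^+)$ fix a single bridge edge $b_{uw} \in E(G)$ between $X_u$ and $X_w$; for each $v \in V(H^+)$ let the terminals of $v$ be the $X_v$-endpoints of the bridges $b_{uv}$ for $uv \in E(H^+)$; within $G[X_v]$ take a spanning tree and trim it down to a minimal subtree $T_v$ containing all terminals, then redefine $X_v := V(T_v)$. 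By minimality every leaf of $T_v$ is a terminal, and the family $(X_v : v \in V(H^+))$ still realises $H^+$ as a minor of $G$.

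I would then take the restriction $\mathcal{X} := (X_v : v \in V(H))$ and claim it is an induced minor model of $H$. Connectedness of each $X_v$, pairwise disjointness, and connectedness of $G[X_u \cup X_v]$ for $uv \in E(H)$ are inherited directly from the $H^+$-model (using $E(H) \subseteq E(H^+)$ and the fact that the bridge $b_{uv}$ still lies between $X_u$ and $X_v$). What needs proof is that for $u, v \in V(H)$ that are non-adjacent in $H$, hence non-adjacent in $H^+$ (since $H$ is induced in $H^+$), there is no edge of $G$ between $X_u$ and $X_v$.

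Suppose for contradiction that $xy \in E(G)$ with $x \in X_u$, $y \in X_v$, $uv \notin E(H^+)$, and $u, v \in V(H)$. Because each of $u, v$ has degree at least $3$ in $H^+$, each of $T_u, T_v$ has at least three terminals. Consequently $x$ has at least two ``structural'' neighbours beyond $y$ (either two neighbours inside $T_u$, or one neighbour in $T_u$ plus an external bridge partner in some $X_w$ with $uw \in E(H^+)$ and $w \neq v$, or two such bridge partners); the same holds for $y$. Hence $\deg_G(x), \deg_G(y) \geq 3$, and the sparsifiable hypothesis forces both $x$ and $y$ into case (iii) of the definition: degree exactly $3$ with two neighbours forming a triangle with the vertex and a third neighbour of degree at most $2$. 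The odd neighbour has degree at most $2$ and so cannot be the other endpoint, which forces $x$ and $y$ to lie on a common triangle $xyz$ with $z$ adjacent to both.

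A short case analysis on which branch set contains $z$ then yields the contradiction: if $z \in X_u$, the edge $xz$ inside $X_u$ combined with the edge $yz$ shows that $T_u$ could have been trimmed further (rerouting through $z$ and deleting $x$), contradicting minimality; symmetrically if $z \in X_v$; and if $z \in X_w$ for some other $w$, then either $uw \notin E(H^+)$ (giving a new bad edge $xz$ whose analysis strictly tightens the local configuration and admits a descent) or $w \in V(H^+)\setminus V(H)$ is auxiliary and can be absorbed by merging $X_w$'s neighbourhood into $X_u$, again contradicting minimality. The main obstacle is making this final case distinction airtight: the triangle clause in the definition of sparsifiable is precisely what creates the possibility of ``hidden'' edges between branch sets, and the requirement that every vertex of $H$ has degree at least $3$ in $H^+$ is used exactly to force enough terminals in $T_u, T_v$ so that the triangle $xyz$ cannot be absorbed without violating minimality of the pruned model.
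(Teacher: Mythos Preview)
Your overall template is Korhonen's, as is the paper's, but your chosen invariant---prune each branch set to a minimal Steiner tree on fixed terminals---is too weak to close the case analysis, and your final dichotomy is not exhaustive.

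The concrete gap is the sub-case $z\in X_w$ with $w\in V(H)$ and $uw,vw\in E(H^+)$. Your sentence ``either $uw\notin E(H^+)$ \ldots\ or $w\in V(H^+)\setminus V(H)$ is auxiliary'' simply omits this possibility. In that configuration $x$ is a leaf of $T_u$ whose unique bridge is $b_{uw}=xz$ and $y$ is a leaf of $T_v$ whose unique bridge is $b_{vw}=yz$; nothing about Steiner-tree minimality is violated, because $x$ and $y$ are bona fide terminals and cannot be trimmed. The move that actually works here is to transfer $x$ and $y$ into $X_w$ (and correspondingly drop them from $X_u,X_v$), which preserves the $H^+$-model but destroys the offending edge $xy$. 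That move does not decrease any ``size of Steiner tree'' potential; it decreases the number of $H$-violating edges. This is exactly the invariant the paper minimises over \emph{all} models of $H^+$ in $G$: choose $(X_v:v\in V(H^+))$ with the fewest edges between $X_u$ and $X_v$ for $u,v\in V(H)$, $uv\notin E(H)$, and then the three cases ($c\notin\bigcup_{w\in N_{H^+}[u]}X_w$; symmetrically for $v$; or $c\in X_w$ with $w$ adjacent to both) each admit a modification strictly reducing that count.

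Separately, your stated reasons in the cases $z\in X_u$ and $z\in X_v$ (``reroute through $z$ and delete $x$'') do not work as written when $x$ is a terminal; those cases can be rescued by instead analysing $y$'s neighbourhood (it then has at most one neighbour eligible to lie in $T_v$ or to serve as a bridge partner of $v$, contradicting $\deg_{H^+}(v)\ge 3$), but that is not the argument you gave. You also do not treat the possibility that $z$ lies in no branch set at all after pruning. All of these wrinkles disappear under the paper's global violating-edge minimisation, which is the cleaner invariant for this lemma.
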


\begin{proof}
    For a model $(X_v:v\in V(H^+))$ of $H^+$ in $G$, we say that an edge $ab\in E(G)$ is \defn{$H$-violating} if there are vertices $u,v\in V(H)$ such that $a\in X_u$, $b\in X_v$ and $uv\not\in E(H)$. Choose $(X_v:v\in V(H^+))$ such that the number of $H$-violating edges is minimised. We claim that there are no $H$-violating edges which implies that $(X_v:v\in V(H))$ is an induced minor of $H$ in $G$.
    
    For the sake of contradiction, suppose $ab\in V(G)$ is an $H$-violating edge with $a\in X_u$ and $b\in X_v$. Since $a$ has degree at most $3$ in $G$, $u$ has degree at least $3$ in $H^+$ and $uv\not \in E(H^+)$, it follows that $a$ has a neighbour in $X_u$. Likewise, $b$ has a neighbour in $X_v$. Now if $a$ has degree $2$ in $G$, then $G[X_u\setminus \{a\}]$ is connected and so we may replace $X_u$ by $X_u\setminus \{a\}$ to obtain a model of $H^+$ with strictly less $H$-violating edges, a contradiction. Thus $a$ must have degree $3$ in $G$ and $b$ likewise must also have degree $3$ in $G$.
    
    Now since $G$ is sparsifable and $a$ and $b$ are adjacent with degree $3$, they have a common neighbour $c$ in $G$. If $c\not \in \bigcup(X_w:w\in N_{H^+}[u])$, then $G[X_u\setminus \{a\}]$ is connected and so we may replace $X_u$ by $X_u\setminus \{a\}$ to obtain a model of $H^+$ with strictly less $H$-violating edges, a contradiction. By symmetry, a contradiction also occurs if $c \not \in \bigcup(X_w:w\in N_{H^+}[v])$. Since $uv\not \in E(H^+)$ it remains to consider the case when there exists $w\in V(H^+)\setminus\{u,v\}$ such that $c\in X_w$ and $uw,vw\in E(H^+)$. Since $w \not \in \{u,v\}$, it follows that $G[X_u\setminus \{a\}]$ and $G[X_v\setminus \{b\}]$ are connected. As such, by replacing $X_u$ by $X_u\setminus \{a\}$, $X_v$ by $X_v\setminus \{b\}$ and $X_w$ by $X_w\cup \{a,b\}$, we obtain a model of $H^+$ with strictly less $H$-violating edges, a contradiction.
\end{proof}

A \defn{distance-5 independent set} $\mathcal{I}\subseteq V(G)$ in a graph $G$ is a set of vertices such that the distance between any pair of vertices in $\mathcal{I}$ is at least $5$. For a distance-5 independent set $\mathcal{I}$ in a graph $G$, let \defn{${\mathcal{I}(G)}$} be the $2$-shallow minor of $G$ obtained by contracting each of the balls of radius 2 that are centered at vertices in $\mathcal{I}$ with corresponding model $(X_v:v\in V(\mathcal{I}(G))$. Observe that if $G$ has maximum degree $\Delta$, then $|X_v|\leq \Delta^2+1$ for all $v \in V(\mathcal{I}(G))$ and so $\pw(\mathcal{I}(G))\geq (\pw(G)+1)/(\Delta^2+1)-1$ (see Theorem~21 in \cite{BGHK1995approximating} for an implicit proof). We use the following lemma implicitly proved by \citet{korhonen2022}.

\begin{lem}[\cite{korhonen2022}]\label{SparsifableDegree}
    Let $G$ be a graph and $\mathcal{I}\subseteq V(G)$ be a distance $5$-independent set. Then for any subgraph $H'$ of ${\mathcal{I}(G)}$ with maximum degree 3, there exists an induced subgraph $G[S]$ of $G$ such that $G[S]$ contains $H'$ as a minor and every vertex in $I\cap S$ is sparsifable in $G[S]$.
\end{lem}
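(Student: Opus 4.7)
The plan is to construct $S$ as a union $S = \bigcup_{v \in V(H')} S_v$ of refined balls $S_v \subseteq X_v$ so that $(S_v)_{v \in V(H')}$ is a minor model of $H'$ in $G[S]$ and every $v \in \mathcal{I} \cap S$ is sparsifable in $G[S]$. For $v \in V(H') \setminus \mathcal{I}$ I simply take $S_v = \{v\}$, so the real work is choosing $S_v \subseteq B_2(v)$ for $v \in \mathcal{I} \cap V(H')$. A key preliminary is that, because $\mathcal{I}$ is distance-$5$ independent, the balls $B_2(u), B_2(v)$ for distinct $u, v \in \mathcal{I}$ are vertex-disjoint and every depth-$1$ vertex of $B_2(v)$ has all its $G$-neighbours inside $B_2(v)$. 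This forces each attachment vertex $a_{vu} \in X_v$ witnessing an edge $vu \in E(H')$ to lie at distance exactly $2$ from $v$, and also means that every $G$-neighbour of $v$ lies in $S_v$; so $v$'s sparsifability is a purely local condition within $S_v$.

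For $v \in \mathcal{I} \cap V(H')$ of degree $d_v \leq 2$ in $H'$, I take $S_v$ to be the union of shortest $v$-to-$a_i$ paths in $G[X_v]$; each has length $2$, so $v$ has at most two neighbours in $S_v$ and condition (1) of sparsifability holds. For the main case $d_v = 3$ with attachments $a_1, a_2, a_3 \in N_2(v)$, I try two strategies in turn. If $G[X_v \setminus \{v\}]$ contains a connected subgraph spanning $\{a_1, a_2, a_3\}$, take $S_v$ to be such a subgraph; then $v \notin S$ and no constraint is imposed at $v$. Otherwise, pick intermediate vertices $y_i \in N_1(v) \cap N(a_i)$ with as many coincidences as possible and set $S_v = \{v, y_1, y_2, y_3, a_1, a_2, a_3\}$; if two of the $a_i$'s admit a common intermediate, $v$ has degree at most $2$ in $G[S_v]$ and condition (1) again applies.

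The remaining configuration is the main obstacle: the sets $N_1(v) \cap N(a_i)$ are pairwise disjoint, forcing distinct $y_1, y_2, y_3$, and $v$ has degree $3$ in $G[S_v]$. Disjointness rules out edges $y_i a_j$ with $i \neq j$, so the only possible obstructions to sparsifability of $v$ are edges within $\{y_1, y_2, y_3\}$: with none present, condition (2) holds; with exactly one, say $y_iy_j$, the triangle $vy_iy_j$ together with the resulting degree-$2$ third neighbour give condition (3). The delicate situation is when at least two of $y_1y_2, y_1y_3, y_2y_3$ are edges; I resolve this by re-choosing $y_i$ within the remaining freedom of $N_1(v) \cap N(a_i)$, or, if necessary, re-choosing the attachment vertex $a_i$ itself, exploiting that a given $H'$-edge between $X_v$ and $X_{u_i}$ may be witnessed by several $G$-edges. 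Verifying that such a re-choice always succeeds is the technical core of the argument and uses the rigid structure forced by $v$ being a cut vertex in $G[X_v]$ in this sub-case, together with a re-routing argument in the spirit of the proof of \cref{SparsifableInducedMinors}. Once $S_v$ is fixed for every $v$, a routine check confirms that $(S_v)_{v \in V(H')}$ is a valid minor model: the $G[S_v]$ are connected, pairwise disjoint, and each $H'$-edge is witnessed by the chosen attachment edge, so $G[S]$ contains $H'$ as a minor while every vertex of $\mathcal{I} \cap S$ is sparsifable in $G[S]$ by construction.
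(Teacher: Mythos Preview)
The paper does not prove this lemma; it is quoted from \citet{korhonen2022} with the remark that it is ``implicitly proved'' there, so there is no in-paper argument to compare against. Your proposal is an attempt to supply a proof where the paper gives none.

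Your overall strategy is sound and in the spirit of Korhonen's construction: localise to each ball $B_2(v)$, use distance-$5$ independence to make the balls disjoint and to confine $N_G(y)$ to $B_2(v)$ for every $y\in N_1(v)$ (so that the attachment vertices lie at distance exactly~$2$ and the sparsifability of $v$ is decided entirely inside $S_v$), and then hand-craft $S_v$ so that $v$ is absent or has degree at most~$2$, or has degree~$3$ in one of the two allowed configurations. The treatment of $d_v\le 2$, of a common intermediate, and of zero or one edge among the $y_i$'s is correct.

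The final sub-case --- at least two of $y_1y_2,\,y_1y_3,\,y_2y_3$ present --- is left as a hand-wave (``re-choosing $y_i$ \ldots or, if necessary, re-choosing the attachment vertex $a_i$ itself \ldots Verifying that such a re-choice always succeeds is the technical core''), and as written this is a gap: you have not actually shown that a successful re-choice exists. In fact no re-choosing is needed, because this sub-case is vacuous. Any two edges on three vertices share a vertex, so some $y_i$ is adjacent to both of the others; then $\{a_1,y_1,a_2,y_2,a_3,y_3\}$ already induces a connected subgraph of $G[X_v\setminus\{v\}]$ spanning $\{a_1,a_2,a_3\}$, contradicting the assumed failure of your Strategy~1. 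Inserting this one-line observation in place of the hand-wave closes the gap and completes the argument.
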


\begin{lem}\label{pwdeg3}
 There exists a constant $\delta$ such that every graph $G$ with $\pw(G)\geq 2k^2\log^{\delta}(k)$ contains a subgraph $H$ with maximum degree $3$ and $\pw(H)\geq k$.
\end{lem}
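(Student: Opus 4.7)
The plan is to establish a pathwidth analogue of the bounded-degree reduction technique of \citet{korhonen2022}, which for treewidth gives a subgraph of maximum degree $3$ with only a polylogarithmic loss. For pathwidth the analogous reduction appears to incur a square-root loss, which accounts for the quadratic factor in the stated bound.

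The approach proceeds as follows. First, characterise $\pw(G)$ via the vertex-separation number: $\pw(G)$ equals, up to additive constants, the minimum over linear orderings $\sigma$ of $V(G)$ of $\max_i |N_G(\sigma_{\le i}) \setminus \sigma_{\le i}|$. Second, fix an ordering $\sigma$ that witnesses $\pw(G)$, and iteratively process vertices: for each vertex $v$ with degree greater than three in the current subgraph, retain three of its incident edges (chosen to maximise a potential function measuring how much of the original separation value is preserved at $v$'s position) and delete the remaining incident edges. Third, bound the total pathwidth loss via a potential-function argument, showing that after all reductions the resulting subgraph $H$ satisfies $\Delta(H) \le 3$ and $\pw(H) \ge \sqrt{\pw(G)}/\log^{O(1)}\pw(G)$, which is at least $k$ by our choice of $\pw(G) \ge 2k^2\log^\delta(k)$.

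The main obstacle is the quantitative analysis underlying step three. Unlike the tree-like structure exploited by Korhonen for treewidth — where deletions are essentially local and propagate only through the width of a local bag — the one-dimensional nature of path decompositions means that deletions at many different vertices can simultaneously affect the same prefix cut of the linear ordering. A careful amortised argument is required to show that the total loss per cut is bounded by $\sqrt{\pw(G)}$ times polylog factors, which is precisely the source of the quadratic (rather than linear) dependence on $k$. An additional subtlety is that the edge deletions must be scheduled so as not to re-introduce high-degree vertices in later rounds; this may require the reduction to be carried out in $O(\log \pw(G))$ phases, with the potential recalibrated between phases to track the remaining pathwidth budget, each phase reducing the maximum degree by a constant factor until the target of $3$ is reached.
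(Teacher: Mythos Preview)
Your proposal is not a proof but a speculative plan: you outline an iterative degree-reduction scheme and then explicitly flag the crucial quantitative step as an unresolved ``main obstacle,'' offering only heuristic reasons why a $\sqrt{\pw(G)}\cdot\mathrm{polylog}$ loss might be achievable. Nothing in the proposal actually establishes that bound, and there is no known result that gives a direct degree-reduction for pathwidth with square-root loss along the lines you sketch; the Korhonen-style argument you invoke works for treewidth precisely because of the local, tree-like bag structure you yourself identify as absent in the path setting. As written, step three is a genuine gap, not a technicality.

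The paper's proof bypasses all of this by combining two off-the-shelf theorems via a case split on $\tw(G)$. If $\tw(G)\ge k\log^{\delta}k$, the Chekuri--Chuzhoy degree-reduction for treewidth yields a subgraph $H$ with $\Delta(H)\le 3$ and $\tw(H)\ge k$, hence $\pw(H)\ge\tw(H)\ge k$. Otherwise $\tw(G)<k\log^{\delta}k$ while $\pw(G)\ge 2k^{2}\log^{\delta}k$, so $\pw(G)>2k\cdot\tw(G)$; the Groenland--Joret--Nadara--Walczak theorem then guarantees that $G$ contains a subdivision of $T_{2k}$ as a subgraph, which already has maximum degree $3$ and pathwidth $k$. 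No potential-function machinery is needed, and the quadratic factor $2k^{2}$ in the hypothesis arises simply from multiplying the treewidth threshold $k\log^{\delta}k$ by the required pathwidth-to-treewidth ratio $2k$, not from a square-root loss in any reduction procedure.
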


\begin{proof}
    By a result of \citet{chekuri2014degree}, if $\tw(G)\geq k\log^{\delta}k$ for some constant $\delta$, then $G$ contains a subgraph $H$ with maximum degree $3$ and treewidth at least $k$. Since pathwidth is bounded from below by treewidth, we are done. So assume that $\tw(G)< \log^{\delta}k$. In which case, by a result of \citet{groenland2021approximating}, $G$ contains a subdivision of the complete binary tree $T_{2k}$ as a subgraph which has the desired pathwidth \cite{scheffler1989die}.
\end{proof}

Let \defn{$T_k^+$} be the tree obtained from $T_{k+1}$ by adding a leaf vertex adjacent to the root. Observe that $T_k$ is an induced subtree of $T_k^+$ where each vertex in $V(T_k)$ has degree at least $3$ in $T_k^+$. We are now ready to prove \cref{MainTheoremInduced}.

\begin{proof}[Proof of \cref{MainTheoremInduced}]
    Let $g(\Delta^4+1):=2^{k+2}-1$, $g(i):= \big(2(g(i+1))^2\log^{\delta}(g(i+1))+1\big)(\Delta^2+1)-1$ and $f(k,\Delta):=g(0)$ where $\delta$ is from \cref{pwdeg3}. Let $G$ be a graph with maximum degree $\Delta$ and $\pw(G)\geq f(k,\Delta)$. We first construct a sparsifable induced subgraph of $G$ with large pathwidth. Using a greedy algorithm, partition $V(G)$ into $\Delta^4+1$ distance-5 sets $\mathcal{I}_0,\dots,\mathcal{I}_{\Delta^4}$. Initialise $i:=0$ and $G_i:=G$. We construct $G_{i+1}$ as an induced subgraph of $G_i$ such that every vertex in $\mathcal{I}_i\cap V(G_{i+1})$ is sparsifable in $G_{i+1}$ and $\pw(G_{i+1})\geq g(i+1)$.
    Since $G_i$ has maximum degree $\Delta$, $\pw(\mathcal{I}_i(G_i))\geq (\pw(G_i)+1)/(\Delta^2+1)-1\geq 2(g(i+1))^2\log^{\delta}(g(i+1))$. By \cref{pwdeg3}, $\mathcal{I}_i(G_i)$ contains a subgraph $H_i$ with maximum degree $3$ where $\pw(H_i)\geq g(i+1).$ By \cref{SparsifableDegree}, there exists an induced subgraph $G_i[S_i]$ of $G_i$ that contains $H_i$ as a minor and every vertex in $\mathcal{I}_i\cap S_i$ is sparsifable in $G_i[S_i]$. As pathwidth is closed under minors, $\pw(G_i[S_i])\geq \pw(H_i)\geq g(i+1)$. Set $G_{i+1}:=G_i[S_i]$.
    
    Now consider $\tilde{G}:=G_{d^4+1}$. By the above procedure, $\tilde{G}$ is a sparsifable induced subgraph of $G$ with pathwidth at least $g(\Delta^4+1)=2^{k+2}-1=|V(T_k^+)|-1$. By \cref{ExcludeForestPW}, $\tilde{G}$ contains $T_k^+$ as a minor. Therefore, by \cref{SparsifableInducedMinors}, $\tilde{G}$ contains $T_k$ as an induced minor and hence $G$ contains $T_k$ as an induced minor.
\end{proof}

\subsection{From Minor-Free to Induced Minors}\label{SectionMinorFreetoInduced}
Let \defn{$\mathcal{T}_k$} be the rooted tree of height $k$ in which every non-leaf node has $k$ children and every path from the root to a leaf has $k$ edges. We prove the following result for $K_n$-minor-free graphs.

\begin{thm}\label{MainHminor}
There is a function $f$ such that every $K_n$-minor-free graph $G$ with pathwidth at least $f(k,n)$ contains $\mathcal{T}_k$ as an induced minor. 
\end{thm}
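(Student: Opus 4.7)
The plan is to extract a minor model of a much larger tree using \cref{ExcludeForestPW} and then clean it up to an induced minor via a Ramsey-type argument that exploits $K_n$-minor-freeness. Choose $K = K(k,n)$ sufficiently large (to be specified) and set $f(k,n) := |V(\mathcal{T}_K)|-1$. By \cref{ExcludeForestPW}, $G$ contains a minor model $(X_v : v \in V(\mathcal{T}_K))$ of $\mathcal{T}_K$. Form the \emph{quotient graph} $H$ on vertex set $V(\mathcal{T}_K)$ by declaring $uv \in E(H)$ whenever $G[X_u \cup X_v]$ is connected. Then $H$ is a minor of $G$, hence $K_n$-minor-free, and contains $\mathcal{T}_K$ as a spanning subgraph together with possibly some additional \emph{chord} edges. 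It suffices to find a subset $S \subseteq V(\mathcal{T}_K)$ such that $\mathcal{T}_K[S] \cong \mathcal{T}_k$ and $H[S]$ contains no chord edges, since restricting the model to $S$ then yields the desired induced $\mathcal{T}_k$ minor in $G$.

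I would build $S$ recursively on depth. At the root $r$, the $K$ children of $r$ determine subtrees $T^{(1)}, \dots, T^{(K)}$ of $\mathcal{T}_K$. Form the auxiliary graph $F$ on $\{1, \dots, K\}$ with $ij \in E(F)$ iff $H$ has an edge between $V(T^{(i)})$ and $V(T^{(j)})$. Contracting each $T^{(i)}$ in $H$ exhibits $F$ as a minor of $H$, so $F$ is $K_n$-minor-free, and in particular has no $K_n$ subgraph. By Ramsey's theorem, provided $K \geq R(n,k)$, the graph $F$ contains an independent set of size $k$, yielding $k$ children whose subtrees have no cross-chords in $H$. Select these $k$ children along with $r$ and recurse inside each chosen subtree with the same parameters.

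The main obstacle will be chord edges that skip multiple levels---for instance, a chord from a deep descendant of one selected branch back to $r$ itself, or to a sibling of $r$'s child further up. The single-level Ramsey step only eliminates cross-chords between \emph{sibling} subtrees and does not control chords between an ancestor and its deeper descendants, and in a $K_n$-minor-free graph a single ancestor can have chord neighbours in arbitrarily many sibling subtrees (this alone does not create a $K_n$ minor). One natural remedy is to work from the outset with a long subdivision of $\mathcal{T}_K$ and apply a variant of \cref{CleaningInducedMinor} to absorb skip-level chords by enlarging the model bags along subdivision paths, which is permissible because we only need an induced \emph{minor}, not an induced subgraph. An alternative is to strengthen the recursion invariant so that, at each selection step, the Ramsey argument simultaneously avoids chord-adjacencies to all already-fixed ancestors; this may force $K$ to grow as an iterated Ramsey function in $k$ and $n$, but still remain finite. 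Verifying either route carefully and bounding $f(k,n)$ is the heart of the proof.
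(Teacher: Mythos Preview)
Your setup --- use \cref{ExcludeForestPW} to get a minor model of a large $\mathcal{T}_K$ and pass to the quotient graph $H$ --- is exactly what the paper does. The divergence is in the next step: the paper invokes \cref{NtreeRamsey} (Atminas--Lozin) as a black box, which says that any graph containing $\mathcal{T}_{g(m)}$ as a subgraph must contain one of $K_m$, $K_{m,m}$, or $\mathcal{T}_m$ as an \emph{induced} subgraph. Applying this with $m = \max\{k,n\}$ to $H$, and noting that $H$ is $K_n$-minor-free (hence contains neither $K_n$ nor $K_{n,n}$ as a subgraph), yields an induced $\mathcal{T}_k$ in $H$ and hence an induced $\mathcal{T}_k$ minor in $G$ in one line.

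What you are attempting by hand is essentially a reproof of \cref{NtreeRamsey}, and the obstacle you isolate --- skip-level chords from an already-fixed ancestor into arbitrarily many sibling subtrees --- is the genuine difficulty in that lemma. Your first remedy (absorbing chords via subdivision paths and \cref{CleaningInducedMinor}) does not straightforwardly apply: that lemma manufactures a single fork or semi-fork from a specific partitioned configuration, not a mechanism for eliminating global chords, and in any case the troublesome chords live in the quotient $H$ rather than in $G$. Your second remedy is closer in spirit to how such Ramsey arguments actually proceed, but $K_n$-minor-freeness alone does not bound the number of sibling subtrees a single ancestor can touch, so the bookkeeping must be set up to output a $K_n$ or $K_{n,n}$ when the cleaning fails --- which is precisely the trichotomy packaged in \cref{NtreeRamsey}. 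Citing that result is by far the shortest route; if you want a self-contained argument, you are effectively committing to reproving it.
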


Since $T_k$ is an induced subgraph of $\mathcal{T}_k$,
\cref{InducedMinortoInducedSubgraph,MainHminor} imply \cref{MainTheoremPWMinor}. \cref{MainHminor} quickly follows from the recent Ramsey-type result by \citet{AL2022degeneracy}. 

\begin{lem}[\cite{AL2022degeneracy}]\label{NtreeRamsey}
There is a function $g$ such that any graph that contains $\mathcal{T}_{g(n)}$ as a subgraph contains $K_n$, $K_{n,n}$ or $\mathcal{T}_{n}$ as an induced subgraph.
\end{lem}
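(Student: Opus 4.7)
The plan is to prove Lemma~\ref{NtreeRamsey} by an iterated Ramsey argument on the given embedded copy of $\mathcal{T}_{g(n)}$. Let $T$ denote this copy, rooted at an arbitrary vertex $r$, so that $G[V(T)]$ is the tree $T$ together with some set of chord edges. The strategy is to construct an induced copy of $\mathcal{T}_n$ in $G$ top-down, selecting at each step the children of the current vertex from among the tree-children of the corresponding vertex of $T$; whenever a local Ramsey application forces many chord edges, we instead extract an induced $K_n$ or $K_{n,n}$. The function $g$ will be a tower of Ramsey numbers of height $O(n)$, chosen to absorb the losses incurred at each recursive step.

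The top step is: among the $g(n)$ tree-children $c_1,\dots,c_{g(n)}$ of $r$, colour each pair $\{c_i,c_j\}$ red if $c_i c_j\in E(G)$ and blue otherwise. By Ramsey's theorem (with $g(n)\geq R(n,n)$) there is a monochromatic set of size $n$; in the red case this set is already an induced $K_n$ in $G$, and in the blue case we retain $n$ pairwise non-adjacent tree-children to serve as the children of $r$ in the induced $\mathcal{T}_n$ we are building. Before recursing into the subtrees $T_{c_i}$, a cleaning step eliminates three kinds of chord edges: (i) edges between distinct chosen subtrees $T_{c_i}$, $T_{c_j}$ --- a dense bipartite adjacency pattern yields an induced $K_{n,n}$ via iterated bipartite Ramsey (after further passing to independent sets on each side), while a sparse pattern can be avoided by passing to large sub-subtrees with no crossing edges; (ii) ``vertical'' chord edges from $r$, or later from higher ancestors, down into $T_{c_i}$ --- classify each descendant by its adjacency profile to the chosen ancestors and apply Ramsey to extract a sub-subtree on which the profile is constant, where the ``many chords'' case again produces $K_n$ or $K_{n,n}$; and (iii) chord edges internal to each $T_{c_i}$, handled by the inductive hypothesis applied to that subtree.

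The main obstacle is the bookkeeping: each Ramsey application reduces both the branching and the remaining height of the surviving subtree, and I must ensure that after all three cleaning ingredients, each retained sub-subtree of $T_{c_i}$ still contains a copy of $\mathcal{T}_{g(n-1)}$ so that the induction on $n$ closes. This forces $g$ to grow astronomically --- concretely, a tower of $R(n,n)$-like terms --- but the recursion is well-defined. Verifying this growth and checking exhaustiveness of the red/blue case analysis at each cleaning stage is the technical crux of the argument.
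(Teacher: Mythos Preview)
The paper does not give its own proof of this lemma: it is quoted verbatim as a black-box result from \cite{AL2022degeneracy}, so there is nothing in the paper to compare your argument against.

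As for the proposal itself, the overall shape (iterated Ramsey down the levels of the embedded tree) is the right idea, but step~(i) of your cleaning phase has a genuine gap. You write that bipartite Ramsey on $V(T_{c_i})\times V(T_{c_j})$ either yields an induced $K_{n,n}$ or lets you ``pass to large sub-subtrees with no crossing edges''. Bipartite Ramsey does not give you the second outcome: in the anti-complete case it hands you $n$ vertices on each side with no edges between them, not a copy of $\mathcal{T}_m$ on each side. But your recursion needs a full $\mathcal{T}_{g(n-1)}$ rooted at each $c_i$ in order to continue, so $n$ scattered vertices are useless. A similar issue afflicts step~(ii): a single ancestor with many neighbours in a subtree produces only a star, not $K_{n,n}$, and once you have several ancestors they already form an induced path, so they are not an independent side of a biclique either.

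What actually works (and what the cited proof does) is to run the Ramsey arguments on \emph{subtrees} rather than on individual vertices: at each stage one looks at the grandchildren (or deeper descendants) of the current node, treats each of their hanging subtrees as a single object, and colours pairs of these objects according to whether there exists a cross edge. The ``no cross edge'' colour class then directly yields many pairwise clean subtrees into which one can recurse; the other colour classes are unwound to produce the $K_n$ or $K_{n,n}$. Your instinct about the tower-type growth of $g$ is correct, but the mechanism that produces it is this subtree-level Ramsey, not vertex-level bipartite Ramsey.
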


\begin{proof}[Proof of \cref{MainHminor}]
Let $f(k,n):=|V(\mathcal{T}_{g(\max\{k,n\})})|-1$ where $g$ is from \cref{NtreeRamsey}.\footnote{Note that $|V(\mathcal{T}_k)|=(k^{k+1}-1)/(k-1)$.} By \cref{ExcludeForestPW}, $G$ contains a minor model $(X_v:v\in V(\mathcal{T}_{g(\max\{k,n\})}))$ of $\mathcal{T}_{g(\max\{k,n\})}$. Let $G'$ be the induced minor of $G$ obtained from contracting each of the $X_v$'s. Since $G'$ contains $\mathcal{T}_{g(\max\{k,n\})}$ as a subgraph, it follows by \cref{NtreeRamsey} that $G'$ contains $K_{n}$, $K_{n,n}$ or $\mathcal{T}_{k}$ as an induced subgraph. Since $G$ excludes $K_n$ as a minor, $G'$ does not contain $K_n$ or $K_{n,n}$ as subgraphs. Hence $G'$ contains $\mathcal{T}_{k}$ as an induced subgraph and thus $G$ contains $\mathcal{T}_{k}$ as an induced minor. 
\end{proof}

\subsection{Finitely Many Forbidden Induced Subgraphs}

Recall that $\G_S$ is the hereditary graph class defined by a finite set $S$ of forbidden induced subgraphs. We now characterise when $\G_S$ has bounded pathwidth. \citet{LR2022treewidth} showed that $\G_S$ has bounded treewidth if and only if $S$ includes a complete graph, a complete bipartite graph, a tripod and a semi-tripod. We strengthen this result to show that $\G_S$ in fact has bounded pathwidth if and only if $S$ includes a complete graph, a complete bipartite graph, a tripod and a semi-tripod. 

\FiniteSetPW*

\begin{proof}
    If $S$ does not include a complete graph, a complete bipartite graph, a tripod or a semi-tripod, then by the observations of \citet{LR2022treewidth}, $\G_S$ has unbounded treewidth and thus has unbounded pathwidth.
    
    Now assume $S$ contains a complete graph, a complete bipartite graph, a tripod and a semi-tripod. Observe that for every tripod (semi-tripod), there exists $k\in \N$ such that every (line graph of a) subdivision of $T_k$ contains the tripod (semi-tripod) as an induced subgraph. For the sake of contradiction, suppose $\G_S$ has unbounded pathwidth. By the results of \citet{LR2022treewidth}, there exists $w\in \N$ such that every graph in $\G_S$ has treewidth at most $w$. Thus every graph in $\G_S$ is $K_{w+2}$-minor-free. Since $\G_S$ has unbounded pathwidth \cref{MainTheoremPWMinor}, implies that for every $k\in \N$, there exists a graph $G_k\in \G_S$ that contains a subdivision of $T_k$ or the line graph of a subdivision of $T_k$ as an induced subgraph. Therefore, for $k$ sufficiently large, $G_k$ contains the tripod or the semi-tripod in $S$ as an induced subgraph, a contradiction.
\end{proof}

\section{Conclusion}\label{SectionGeneral}
We raise the following open problem: what are the unavoidable induced subgraphs for graphs with large pathwidth in the general setting? Apart from subdivisions of large complete binary trees and line graphs of subdivisions of large complete binary trees, the other obvious candidates are complete graphs and complete bipartite graphs. While it is tempting to conjecture that this list is exhaustive, we conclude with a non-trivial family of graphs with unbounded pathwidth that avoids these graphs as induced subgraphs which highlights the difficulty of resolving this problem. Let \defn{$\mathcal{S}(T_k)$} be the family of graphs that are subdivisions of $T_k$ or line graphs of subdivisions of $T_k$. The following proof is inspired by a recent construction of Davies (see \cite{ACHS2022inducedV}).

\begin{prop}
For every $k\in \N$, there exists a graph with pathwidth at least $k$ that forbids $\{K_3, K_{2,2}\} \cup \mathcal{S}(T_5)$ as induced subgraphs.
\end{prop}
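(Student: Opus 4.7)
The first observation is that the line graph of any subdivision of $T_5$ contains a triangle: at each internal degree-$3$ vertex $v$ of $T_5$, the three edges of the subdivision incident to $v$ are pairwise adjacent in the line graph, forming a $K_3$. Consequently, requiring $G$ to be $K_3$-free already excludes every member of $\mathcal{S}(T_5)$ that is a line graph of a subdivision. So the task reduces to constructing, for each $k$, a $\{K_3, K_{2,2}\}$-free graph $G_k$ with $\pw(G_k) \ge k$ that contains no induced subdivision of $T_5$.

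Following Davies's template from \cite{ACHS2022inducedV}, the plan is to take $G_k$ as a bipartite graph of girth at least $6$ with a carefully designed layered structure $V_0 \cup V_1 \cup \cdots \cup V_m$: edges occur only between consecutive layers, and each consecutive-layer bipartite link is a sparse girth-$\ge 6$ gadget. By taking the layer sizes and the number of layers $m$ large enough, a standard separator / Menger-type argument forces $\pw(G_k) \ge k$, while the bipartite, girth-$\ge 6$ conditions immediately give the $\{K_3, K_{2,2}\}$-freeness.

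The heart of the argument is verifying that $G_k$ contains no induced subdivision of $T_5$. A subdivision of $T_5$ has $30$ branching vertices of degree exactly $3$, and for any such vertex sitting in $G_k$, its three neighbours are forced (by bipartiteness together with girth $\ge 6$) to lie pairwise at graph-distance $\ge 4$. A layer-by-layer case analysis of how these $30$ branching vertices could be placed consistently in the layered structure should yield a contradiction: two branching vertices are eventually forced to share neighbours or to be joined by a shortcut, creating either an extra chord (breaking inducedness) or a short cycle (breaking girth). The main obstacle is that the subdivision lengths between branching vertices are unbounded, so the contradiction cannot come from metric or degree counts alone; it must be driven by the \emph{induced} (chord-free) requirement on the purported copy of a subdivision of $T_5$, and controlling this simultaneously with the lower bound on $\pw(G_k)$ is the crux of the Davies-style construction.
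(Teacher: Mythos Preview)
Your first paragraph is correct and matches the paper: $K_3$-freeness already rules out the line-graph half of $\mathcal{S}(T_5)$, so the problem reduces to building a girth-$\geq 6$ graph of large pathwidth with no induced subdivision of $T_5$.

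From that point on, however, the proposal is a plan rather than a proof, and the plan is missing the mechanism that actually makes it work. You propose a layered bipartite graph of girth $\geq 6$ and say a ``layer-by-layer case analysis of how these 30 branching vertices could be placed'' should give a contradiction, while conceding that unbounded subdivision lengths are ``the main obstacle''. That obstacle is real: in a generic layered girth-$6$ graph there is no reason a subdivision of $T_5$ cannot be embedded as an induced subgraph, and no finite case analysis on the $30$ branch vertices will succeed without a structural hook you have not supplied. You also have not said how you get $\pw(G_k)\geq k$; ``a standard separator / Menger-type argument'' does not follow from the hypotheses you list.

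The paper's construction provides exactly the missing hook. Start from a subdivision $T_{2k}^*$ of $T_{2k}$ whose BFS layering $V_0,V_1,\dots$ places the original (degree-$3$) vertices $X=V(T_{2k})$ in pairwise distinct layers, at least three layers apart. Then add every edge $uy$ with $u\in X$ and $y$ in the same layer as $u$. Two features drop out immediately: pathwidth is at least $k$ because the graph still contains a subdivision of $T_{2k}$; and $\hat T_{2k}-X$ has maximum degree $2$, so every induced claw has its centre in $X$. The crucial extra feature is that each $u\in X$ \emph{dominates its own layer}. Now only four branch vertices of a hypothetical induced $T_5'$ are needed, not thirty: pick four pairwise disjoint claws $C_1,\dots,C_4$ in $T_5'$ with internally disjoint, pairwise nonadjacent paths from $C_1$ to each of $C_2,C_3,C_4$; their centres $v_i\in X$ lie in distinct layers $V_{\ell_i}$, and by pigeonhole three of the $\ell_i$ are monotone, say $\ell_1<\ell_2<\ell_3$. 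The $(v_1,v_3)$-path in $T_5'$ must cross $V_{\ell_2}$ at some internal vertex $y$, and since $v_2$ dominates $V_{\ell_2}$ the edge $v_2y$ is present in $\hat T_{2k}$ but is a non-edge of $T_5'$, contradicting inducedness.

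So the missing idea is not ``more careful case analysis'' but the domination-per-layer property built into the construction; it converts the unbounded-length difficulty you flagged into a one-line pigeonhole argument.
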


\begin{proof}
Let ${T}_{2k}^*$ be a subdivision of $(T_{2k},r)$ such that, for the \textsc{bfs}-layering $V_0,V_1,\dots$ of ${T}_{2k}^*$ with $V_0=\{r\}$, if $u,v\in V(T_{2k})$ and $u\in V_i$ and $v\in V_j$, then $|i-j|\geq 3$. Let $\hat{T}_{2k}$ be obtain from $T_{2k}^*$ by adding the edges $uy$ whenever $u\in V(T_{2k})$ and $u,y\in V_a$ for some $a\in \N$. Let $X:=V(T_{2k})\subseteq V(\hat{T}_{2k})$.

We claim that $\hat{T}_{2k}$ has the desired properties. By construction, $\hat{T}_{2k}$ contains a subdivision of $T_{2k}$ as a subgraph and thus has pathwidth at least $k$ \cite{scheffler1989die}. Moreover, since the union of any three layers induces a forest, $\hat{T}_{2k}$ has girth at least $6$ and thus forbids $K_3, K_{2,2}$ as well as the line graph of any subdivision of $T_5$ as induced subgraphs. 

\begin{figure}[ht]
    \centering
    \includegraphics[width=0.7\textwidth]{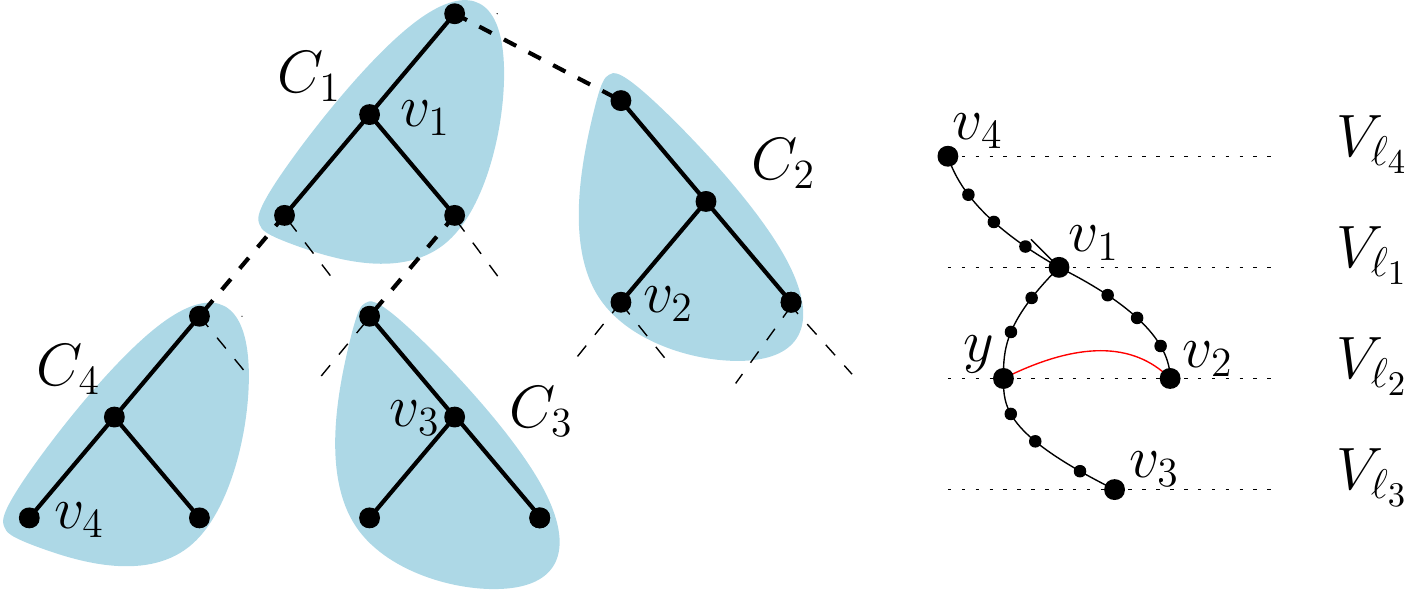}
\caption{Left: four disjoint claws in $T_5'$. Right: non-edge in $T_5'$.}
    \label{fig:T5}
\end{figure}

It remains to show that $\hat{T}_{2k}$ does not contain a subdivision of $T_5$ as an induced subgraph. For the sake of contradiction, suppose $\hat{T}_{2k}$ does contain a subdivision $T_5'$ of $T_5$ as an induced subgraph. Since $\hat{T}_{2k}- X$ has maximum degree $2$, every induced claw in $\hat{T}_{2k}$ contains a vertex from $X$. Observe that $T_5'$ contains four disjoint claws $C_1,C_2,C_3,C_4$ such that there are disjoint, nonadjacent paths from $C_1$ to $C_j$ for each $j\in \{2,3,4\}$ (see \cref{fig:T5}). For each $i \in \{1,2,3,4\}$, let $v_i\in V(C_i)\cap X$ and let $\ell_i$ be such that $v_i\in V_{\ell_i}$. Since $|V_a\cap X|\leq 1$ for all $a \in \N$, it follows that $\ell_i\neq \ell_j$ for all distinct $i,j \in \{1,2,3,4\}$. Thus, by the pigeon-hole principle, we may assume without loss of generality that either $\ell_1<\ell_2<\ell_3$ or $\ell_1>\ell_2>\ell_3$. Since $V_0,V_1,\dots$ is a layering, it follows that the $(v_1,v_3)$-path in $T_5'$ contains an internal vertex $y\in V_{\ell_2}$. However, since $v_2$ dominates $V_{\ell_2}$, it follows that the edge $v_2y$ is present which is a non-edge in $T_5'$ thus contradicting $T_5'$ being an induced subgraph (see \cref{fig:T5}).
\end{proof}

\subsection*{Acknowledgement}
Thanks to David Wood for helpful conversations and comments.

\fontsize{9.5}{10.5} 
\selectfont 
\let\oldthebibliography=\thebibliography
\let\endoldthebibliography=\endthebibliography
\renewenvironment{thebibliography}[1]{%
	\begin{oldthebibliography}{#1}%
		\setlength{\parskip}{0.2ex}%
		\setlength{\itemsep}{0.2ex}%
	}{\end{oldthebibliography}}

\bibliographystyle{DavidNatbibStyle}
\bibliography{main.bbl}	
\end{document}